\documentclass[11pt,twoside,reqno,letterpaper]{amsart}
\usepackage[letterpaper, margin=1.42in]{geometry}
\usepackage{amsmath,mathtools,commath,amsaddr}
\usepackage{fancyhdr}
\usepackage{amsthm}
\usepackage{amsfonts}
\usepackage{amssymb}
\usepackage{amscd}
\usepackage{graphicx}
\usepackage{afterpage}
\usepackage{enumerate}
\usepackage{bm}
\usepackage{cite}
\usepackage{cases}
\usepackage{caption}
\usepackage[colorlinks=true, urlcolor=blue,  linkcolor=blue, citecolor=blue]{hyperref}
\usepackage{babel}
\usepackage{prettyref}
\usepackage{pgfplots}
\pgfplotsset{compat=1.17}
\usepackage{booktabs}
\usepackage{algorithm}
\usepackage{algpseudocode}
\usepackage{float}
\usepackage{eso-pic}
\usepackage{everypage}

\newcommand\BackgroundPic{
    \put(180,285){
        \parbox[b][\paperheight]{\paperwidth}{
            \vfill
            \centering
            \includegraphics[scale=0.15]{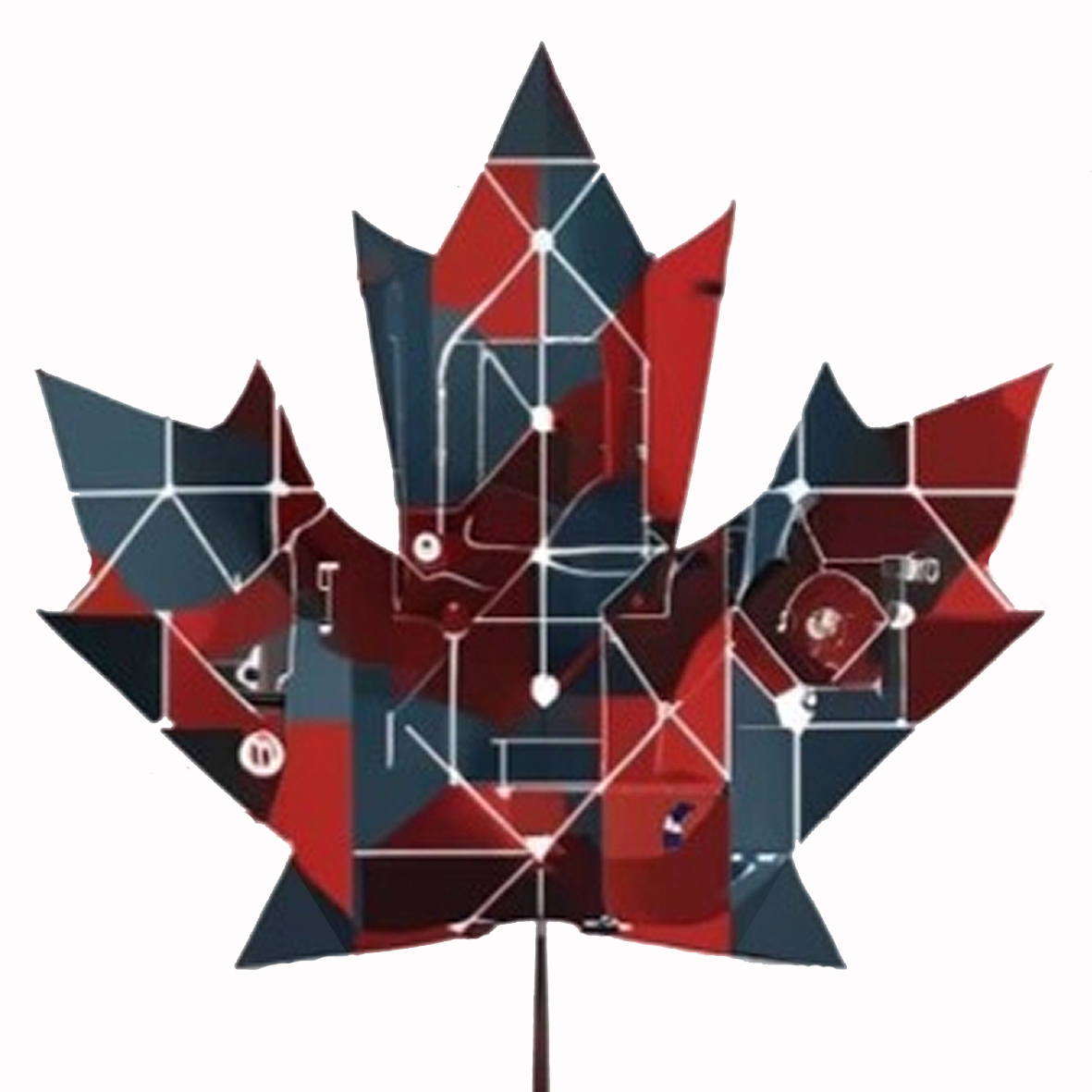}
            \vfill
        }
    }
}

\newcommand{\tophead}{
\noindent
 {\scriptsize  \textsc{CANADIAN TRANSACTIONS of OPERATOR THEORY} } \\
     {\tiny Volume 1, Issue 1, (2025) Article 250104}\\ \medskip
}

\RequirePackage[absolute]{textpos}
\DeclareRobustCommand{\copyrightnote}[1]{%
\AddEverypageHook{
  \begin{textblock}{100}(80,270)
      \textit{#1}%
  \end{textblock}%
}    }

\makeatletter
\newtheorem{theorem}{Theorem}[section]

\newtheorem{corollary}[theorem]{Corollary}

\newtheorem{definition}[theorem]{Definition}

\newtheorem{lemma}[theorem]{Lemma}

\newtheorem{proposition}[theorem]{Proposition}

\newcommand{\M}{\mathcal M}        
\newcommand{\B}{\mathcal B}  
\newcommand{\N}{\mathbb N}        

\title[\it CANADIAN TRANSACTIONS of OPERATOR THEORY 1 (2025) Article 250104]{Quantum Doubly Stochastic Operators on Non-commutative L$_p$-Spaces}

\author[\it CANADIAN TRANSACTIONS of OPERATOR THEORY 1 (2025) Article 250104]{Emma Sulaver}
\address{Mathematical and Statistical Sciences Department, University of Alberta, Edmonton, AB T6G 2R3, Canada}
\address{\scriptsize{Email:esulaver@ualberta.ca}}
\thanks{Received: April 2025, Accepted: June 2025}

\begin{document}
\tophead
\AddToShipoutPicture*{\BackgroundPic}
\copyrightnote{\scriptsize Copyright © by Premier Transactions Press.}
\sloppy
\overfullrule=5pt

\begin{abstract}
We introduce and systematically develop the theory of \emph{quantum doubly stochastic operators}, i.e.\ positive, trace‐preserving maps on non-commutative $L_p$-spaces associated to semifinite von Neumann algebras.  After establishing basic norm and duality properties, we characterize strict norm inequalities, give necessary and sufficient criteria for compactness in the sense of Schatten‐ideals, and exhibit a range of new examples in both finite and infinite dimensions.  Applications to quantum majorization and stability under interpolation are also discussed.\\

\smallskip
\noindent \textbf{\emph{2020 Mathematics Subject Classification:}}   46L52, 47A30, 81P45\\
\noindent \textbf{Keywords.} Non-commutative $L_p$-spaces; doubly stochastic maps; quantum channels; interpolation; compact operators.
\end{abstract}

\maketitle

\section{Introduction}
\label{sec:intro}

The theory of doubly stochastic operators has its classical roots in the finite-dimensional setting, where Birkhoff's theorem establishes that every doubly stochastic matrix is a convex combination of permutation matrices \cite{Birkhoff1946}.  In infinite-dimensional sequence spaces, the extension of doubly stochasticity to bounded linear operators on $\ell_p$-spaces was initiated by Mayer and Pitt \cite{Pitt1971}.  These works demonstrate the rich interplay between combinatorial, geometric, and functional-analytic techniques in understanding norm bounds, spectrum, and ergodic behavior of stochastic operators.

Parallel to the classical development, the framework of non-commutative $L_p$-spaces associated to a semifinite von Neumann algebra $(\mathcal{M},\tau)$ has been extensively developed over the past decades.  Segal \cite{Segal1953} introduced measurable operators affiliated to $\mathcal{M}$, later refined by Terp \cite{Terp1981} and Haagerup \cite{Haagerup1979}, leading to a robust interpolation theory by Pisier and Xu \cite{PisierXu1987}.  Junge's work on non-commutative martingales and duality further deepened the understanding of operator-valued $L_p$ norms and the structure of positive maps on these spaces \cite{Junge2002}.

In the context of quantum information theory, trace-preserving completely positive maps, or quantum channels, play the role of non-commutative analogues of doubly stochastic maps.  Seminal contributions by Choi \cite{Choi1975} and Kraus \cite{Kraus1971} characterized the structure of such maps in finite dimensions, while Holevo \cite{Holevo1998} and Wolf \cite{Wolf2012} investigated their capacities and entropic properties.  Connections between quantum majorization, entropic inequalities, and unital channels have been explored by Alberti and Uhlmann \cite{AlbertiUhlmann1982}, and Petz \cite{Petz1986} showed how trace-preserving conditions influence relative entropy monotonicity.

Despite these advances, a unified theory of \emph{quantum doubly stochastic} operators---positive, trace- and unital-preserving maps on non-commutative $L_p$-spaces---remains largely undeveloped.  In this manuscript, we introduce a systematic study of such operators, establishing norm bounds and strict contraction criteria (Section~\ref{sec:def_norm}), characterizing compactness within Schatten ideals and closure under operator topologies (Section~\ref{sec:compact}), and proving stability under complex interpolation and perturbations (Section~\ref{sec:interpolation}).  Finally, we apply our results to problems in quantum majorization, including new entropic inequalities and structural insights (Section~\ref{sec:majorization}).  We conclude with a discussion of open problems and potential extensions (Section~\ref{sec:concl}).

\section{Preliminaries}
\label{sec:preliminaries}

Throughout the paper, we let $(M,\tau)$ be a semifinite von Neumann algebra equipped with a normal, faithful, semifinite trace $\tau$.  We write $M_+$ for the positive part of $M$ and denote by $\widetilde M$ the $*$‑algebra of all $\tau$‑measurable operators affiliated with $M$.

An unbounded operator $x$ affiliated with $M$ is called \emph{$\tau$‑measurable} if its spectral truncations have finite trace:

\begin{definition}[See \cite{Segal1953,Terp1981}]
An operator $x$ affiliated with $M$ is \emph{$\tau$‑measurable} if for some (hence every) $\lambda>0$,
\[
\tau\del{\chi_{(\lambda,\infty)}\del{\, \abs{x}}}<\infty.
\]
The algebra of all such operators is denoted by $\widetilde M$.
\end{definition}

These measurable operators allow a natural extension of classical $L^p$‑spaces:

\begin{definition}[See \cite{Haagerup1979,PisierXu1987}]
For $1 \le p < \infty$, define
\[
L^p(M,\tau)
=\cbr{\,x\in\widetilde M : \norm{x}_p := \del{\tau(\abs{x}^p)}^{1/p}<\infty},
\]
and set $L^\infty(M,\tau)=M$ with the usual operator norm.
\end{definition}

\noindent These spaces enjoy many of the familiar properties of classical $L^p$‑spaces: completeness, duality, and interpolation (see below).  In particular, for $1<p<\infty$, one has the isometric identification
\[
\del{L^p(M,\tau)}^* \cong L^q(M,\tau),
\qquad \frac1p+\frac1q=1,
\]
via the pairing $\langle x,y\rangle = \tau(xy)$.

We will study linear maps on $M$ preserving the order and, in many cases, the trace.  These play the role of ``stochastic’’ or Markov operators in the non‑commutative setting.

\begin{definition}[See \cite{Choi1975}]
A linear map $\Phi\colon M\to M$ is
\begin{itemize}
  \item \emph{positive} if $\Phi(x)\ge0$ whenever $x\ge0$;
  \item \emph{completely positive} if for each $n\in\mathbb N$, the amplified map
  \[
    \Phi_n\colon M_n(M)\to M_n(M), 
    \quad [x_{ij}]\mapsto[\Phi(x_{ij})],
  \]
  is positive.
\end{itemize}
\end{definition}

A positive map $\Phi\colon M\to M$ is called
\begin{enumerate}
  \item \emph{trace‑preserving} if $\tau\del{\Phi(x)}=\tau(x)$ for all $x\in L^1(M,\tau)$;
  \item \emph{unital} if $\Phi(1)=1$.
\end{enumerate}

\noindent Such maps are the non‑commutative analogues of classical doubly‑stochastic matrices.

A fundamental tool in extending norm bounds between different $L^p$‑spaces is complex interpolation.

\begin{theorem}[See \cite{Kosaki1984}]
Let $1\le p_0<p_1\le\infty$ and $0<\theta<1$.  Then
\[
\del{L^{p_0}(M,\tau),\,L^{p_1}(M,\tau)}_\theta
\;\cong\;
L^p(M,\tau),
\qquad \frac1p=\frac{1-\theta}{p_0}+\frac{\theta}{p_1},
\]
isometrically.
\end{theorem}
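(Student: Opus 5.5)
The plan is to prove the theorem by the Calderón method, imitating the commutative Riesz--Thorin argument on the dense subspace of ``simple'' operators. We work inside the compatible couple $(L^{p_0},L^{p_1})$, viewed as subspaces of $\widetilde M$ (or, when $p_1=\infty$, of $L^{p_0}+M$). Fix $p$ and $\theta$ as in the statement. Let $\mathcal S$ be the span of operators of the form $u e$, where $u$ is a partial isometry in $M$ and $e$ is a projection with $\tau(e)<\infty$. By the polar decomposition and spectral truncation, $\mathcal S$ is dense in $L^p$ and in $L^{p_0}\cap L^{p_1}$ (in each norm, with $p_1<\infty$; for $p_1=\infty$ we use density in the $p$-norm only). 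It therefore suffices to prove two things for $x$ in a dense class: first $\norm{x}_\theta\le\norm{x}_p$, and second $\norm{x}_p\le\norm{x}_\theta$. We then extend by density, using that both spaces are Banach spaces that embed continuously into $\widetilde M$ with the measure topology.

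\emph{Upper bound.} Take $x=u\abs{x}$ with $\abs{x}=\sum_k\lambda_k e_k$ a finite spectral sum, where the $e_k$ are orthogonal projections of finite trace and $\norm{x}_p=1$. Define
\[
f(z)=u\,\abs{x}^{p\left(\frac{1-z}{p_0}+\frac{z}{p_1}\right)}=\sum_k \lambda_k^{p\left(\frac{1-z}{p_0}+\frac{z}{p_1}\right)}u e_k ,
\]
which is bounded and analytic into $L^{p_0}\cap L^{p_1}$. On $\Re z=0$ the modulus is $\abs{x}^{p/p_0}$ times a unitary factor, so $\norm{f(it)}_{p_0}^{p_0}=\tau(\abs{x}^p)=1$. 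Likewise $\norm{f(1+it)}_{p_1}=1$; when $p_1=\infty$ this operator norm is at most $1$ because $\abs{x}^0$ is the support projection. Since $f(\theta)=x$, we get $\norm{x}_\theta\le1$.

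\emph{Lower bound.} Here we use duality, taking $1<p<\infty$ (the endpoint cases are excluded by $0<\theta<1$). Given $x$ with $\norm{x}_\theta<1$, choose an admissible $f$ with boundary norms at most $1$ and $f(\theta)=x$. By the stated duality $(L^p)^*\cong L^q$, we have $\norm{x}_p=\sup\abs{\tau(xy)}$ over $y\in\mathcal S$ with $\norm{y}_q\le1$. For such a $y=v\abs{y}$, build the dual analytic family
\[
g(z)=v\abs{y}^{q\left(\frac{1-z}{q_0}+\frac{z}{q_1}\right)}
\]
with conjugate exponents $q_j$, and set $F(z)=\tau(f(z)g(z))$. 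Then $F$ is bounded and analytic on the strip, because $g$ takes values in a finite-dimensional span of operators in $L^{q_0}\cap L^{q_1}$. Hölder's inequality in $L^{p_j}$ gives $\abs{F}\le1$ on both boundary lines, so the three lines lemma yields $\abs{\tau(xy)}=\abs{F(\theta)}\le1$.

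The main obstacle is making the analytic-family argument rigorous in the noncommutative setting. We need noncommutative Hölder's inequality $\abs{\tau(ab)}\le\norm{a}_{r}\norm{b}_{r'}$, together with analyticity of $z\mapsto\tau(f(z)g(z))$ when $f$ is an arbitrary element of the Calderón space $\mathcal F(L^{p_0},L^{p_1})$. I would handle this by first restricting to the dense subclass of $\mathcal F$ consisting of functions $\sum_j \varphi_j(z)x_j$ with $x_j\in L^{p_0}\cap L^{p_1}$ and scalar analytic $\varphi_j$ (a standard density lemma for Calderón spaces). Analyticity is then immediate, and the inequality passes to the closure. The density of $\mathcal S$ in the intersection, and in $L^p$ when $p_1=\infty$, comes from normality and semifiniteness of $\tau$ applied to spectral projections.
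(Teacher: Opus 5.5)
The paper gives no proof of this theorem; it is quoted from Kosaki without proof. Kosaki works in a general ($\sigma$-finite, possibly type III) von Neumann algebra. There $M$ and $M_*$ are made into a compatible couple through an embedding determined by a faithful normal state, and the complex interpolation spaces are identified with Haagerup's $L^p$-spaces. Obtaining the statement for an arbitrary couple $(L^{p_0},L^{p_1})$ from that needs a further reiteration step.

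Your proposal is correct and follows a different route: the direct tracial Calder\'on (Riesz--Thorin) argument. The family $u\abs{x}^{p((1-z)/p_0+z/p_1)}$ gives $\norm{x}_\theta\le\norm{x}_p$. The dual family, with noncommutative H\"older and the three lines lemma, gives the reverse inequality. Your route needs only spectral calculus in $\widetilde M$, H\"older and $L^p$--$L^q$ duality, handles every pair $p_0<p_1$ at once, and is what the semifinite setting of this paper requires. Kosaki's route needs the Haagerup/Tomita--Takesaki machinery but covers type III algebras.

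A few points should be tightened.

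In the lower bound, take $y$ of the same simple form $v\sum_k\mu_k e_k$ as in your upper bound; such $y$ are still dense in $L^q$. A general element of your span $\mathcal S$ need not have a step-function modulus, and then $g$ is not finite-dimensionally valued. With simple $y$, each $a\mapsto\tau(a\,ve_k)$ is a bounded functional on $L^{p_0}+L^{p_1}$. So $F$ is analytic for every admissible $f$, and you do not need the dense subclass of the Calder\'on space.

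Next, run the lower bound only for $x\in L^{p_0}\cap L^{p_1}$. There $x\in L^p$ is already known, so $\norm{x}_p=\sup\abs{\tau(xy)}$ is legitimate. Then extend, using that $L^{p_0}\cap L^{p_1}$ is always dense in $(L^{p_0},L^{p_1})_\theta$ and that both spaces embed compatibly in $L^{p_0}+L^{p_1}$.

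Your caveat for $p_1=\infty$ is not needed. Simple operators are dense in $L^{p_0}\cap M$ for the intersection norm: cut $\abs{x}$ off by the finite-trace projection $\chi_{(\varepsilon,\infty)}(\abs{x})$, then approximate uniformly by step functions.

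Finally, if your Calder\'on space requires decay along the boundary lines, multiply $f$ by $e^{\delta(z-\theta)^2}$ and let $\delta\to0$.
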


\noindent As a consequence, any linear map bounded on $L^{p_0}$ and $L^{p_1}$ extends to $L^p$ with the usual interpolation estimate.


\section{Definitions and Norm Properties}
\label{sec:def_norm}

We now introduce the core concept of \emph{quantum doubly stochastic} maps on non-commutative $L_p$-spaces, study their fundamental properties, and establish norm bounds and contraction criteria.

Let $(\M,\tau)$ be a semifinite von Neumann algebra.  For $1\le p<\infty$, a bounded linear map
\[
\Phi:L_p(\M,\tau)\to L_p(\M,\tau)
\]
is called \emph{quantum doubly stochastic} (QDS) if:
\begin{enumerate}
  \item \textsc{Trace-preserving:}\enspace $\tau\del{\Phi(x)}=\tau(x)$ for all $x\in L_1(\M,\tau)$.
  \item \textsc{Unital:}\enspace $\Phi(1)=1$ in $\M$ (equivalently, the pre-adjoint $\Phi^*:L_\infty(\M)\to L_\infty(\M)$ satisfies $\Phi^*(1)=1$).
  \item \textsc{Positivity:}\enspace $\Phi$ is positive, and typically assumed completely positive in quantum contexts.
\end{enumerate}

When $\M=M_n(\mathbb C)$ is the algebra of $n\times n$ matrices, a QDS map coincides with a \emph{unital, trace-preserving completely positive} map, i.e. a \emph{bi-stochastic quantum channel}.  In this finite-dimensional case, Kraus representation ensures
\[
\Phi(x)=\sum_{i=1}^rK_i^* x K_i,
\qquad \sum_{i=1}^rK_i^*K_i=1,\quad \sum_{i=1}^rK_iK_i^*=1.
\]

We now extend the classical $\ell_p$ norm bound to the non-commutative setting.

\begin{theorem}
\label{thm:norm_bound}
Let $1<p<\infty$ and $1/p+1/q=1$.  If $\Phi:L_p(\M)\to L_p(\M)$ is QDS, then
\[
\norm{\Phi}_{L_p\to L_p}=\norm{\Phi^*}_{L_q\to L_q}=1.
\]
Moreover, equality is attained on the unit sphere of positive elements.
\end{theorem}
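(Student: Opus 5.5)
The plan is to prove the upper bound $\norm{\Phi}_{L_p\to L_p}\le 1$ by interpolation between the endpoints $p_0=1$ and $p_1=\infty$. I will then transfer the bound to the adjoint by duality and exhibit a positive norming element for the reverse inequality.

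\emph{Endpoint estimates.} On $L_\infty(\M)=\M$, positivity and unitality give, for self-adjoint $x$ with $-\norm{x}_\infty 1\le x\le \norm{x}_\infty 1$, that $-\norm{x}_\infty 1\le\Phi(x)\le\norm{x}_\infty 1$. Hence $\norm{\Phi(x)}_\infty\le\norm{x}_\infty$ on the self-adjoint part. For general $x$, the clean way to get $\norm{\Phi}_{\infty\to\infty}=\norm{\Phi(1)}=1$ is the Russo--Dye theorem, valid for any positive unital map between $C^*$-algebras; under complete positivity one could instead use the Kadison--Schwarz inequality.

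On $L_1$, trace preservation together with positivity gives $\norm{\Phi(x)}_1=\tau(\Phi(x))=\tau(x)=\norm{x}_1$ for $x\ge0$. For self-adjoint $x=x_+-x_-$ we get $\norm{\Phi(x)}_1\le\norm{x}_1$. For general $x$ the cleanest route is duality: $\norm{\Phi}_{1\to1}=\norm{\Phi^*}_{\infty\to\infty}$, where $\Phi^*$ is defined by $\tau(\Phi(x)y)=\tau(x\Phi^*(y))$. Trace preservation of $\Phi$ is exactly $\Phi^*(1)=1$, and $\Phi^*$ is positive, so Russo--Dye again yields norm $1$.

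\emph{Interpolation and duality.} By the Kosaki interpolation theorem stated in Section~\ref{sec:preliminaries}, with $\theta=1-1/p$, we obtain
\[
\norm{\Phi}_{p\to p}\le \norm{\Phi}_{1\to1}^{1-\theta}\norm{\Phi}_{\infty\to\infty}^{\theta}\le 1 .
\]
The same argument applies verbatim to $\Phi^*$, since $\Phi^*$ is also positive, unital and trace preserving. Alternatively, the duality $(L_p)^*\cong L_q$ gives $\norm{\Phi^*}_{q\to q}=\norm{\Phi}_{p\to p}$ directly.

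\emph{Lower bound and attainment.} When $\tau(1)<\infty$, take $x=1/\norm{1}_p$. This is positive, lies on the unit sphere, and satisfies $\Phi(x)=x$, so the norm equals $1$ and is attained at a positive element. In the infinite-trace case $1\notin L_p$, and I expect this to be the main obstacle. I would first try projections $e$ of finite trace and estimate $\norm{\Phi(e)}_p$ from below. Since $0\le\Phi(e)\le 1$, we have $\Phi(e)^p\le\Phi(e)$, hence
\[
\norm{\Phi(e)}_p^p\le\tau(\Phi(e))=\tau(e)=\norm{e}_p^p,
\]
which only gives the wrong direction. For the lower bound I would instead use the pairing with $\Phi^*$, via $\tau(\Phi(e)e')=\tau(e\Phi^*(e'))$, and Hölder, choosing an increasing net $e_i\uparrow1$ of finite-trace projections. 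Normality should then force $\tau(e_i\Phi^*(e_i))/\tau(e_i)\to1$ in suitable cases, which would give $\norm{\Phi}_{p\to p}=1$ as a supremum, with attainment on positive elements.

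Genuine attainment (not just approximation) in the infinite case may fail in general; for example, a shift-type QDS map on $\ell_\infty(\mathbb Z)$ has norm $1$ and preserves norms of positive elements only through $\Phi^*$-invariance. In that setting I would interpret the claim as the supremum being approached along positive unit vectors. This is where I expect the statement may need the finite-trace assumption or some adjustment.
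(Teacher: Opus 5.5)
\textbf{Gap: the infinite-trace case.} The lower bound $\|\Phi\|_{p\to p}\ge 1$ and its attainment when $\tau(1)=\infty$ are not established. No argument can establish them, because the statement is false there.

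Let $\mathcal{M}=\ell_\infty(\mathbb{F}_2)$ with counting measure as trace, and set $\Phi(x)(g)=\frac14\sum_{s\in\{a^{\pm1},b^{\pm1}\}}x(gs)$. This $\Phi$ is positive, hence completely positive since $\mathcal{M}$ is commutative. It is also unital and trace-preserving, so it is QDS. But Kesten's theorem gives $\|\Phi\|_{2\to2}=\sqrt{3}/2$. Riesz--Thorin between $p=2$ and the endpoint bounds $\|\Phi\|_{1\to1}=\|\Phi\|_{\infty\to\infty}=1$ then gives $\|\Phi\|_{p\to p}<1$ for every $1<p<\infty$.

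So the net $e_i\uparrow 1$ with $\tau(e_i\Phi^*(e_i))/\tau(e_i)\to 1$ that you hoped for does not exist in general. Here every projection is some $\chi_F$, and that condition is exactly the F{\o}lner condition, i.e.\ an amenability hypothesis rather than a consequence of normality.

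Even when the norm is $1$, it need not be attained. Take $\Phi=\frac12(S+S^{-1})$ on $\ell_\infty(\mathbb{Z})$, with $S$ the bilateral shift. Testing on indicators of long intervals shows the norm on $\ell_p$ is $1$. However, $\|\Phi(x)\|_p=\|x\|_p$ together with $\|Sx\|_p=\|S^{-1}x\|_p=\|x\|_p$ forces $Sx=S^{-1}x$ by strict convexity of $\ell_p$. Then $x$ is $2$-periodic, hence $0$. Your own example, read literally, does not show non-attainment: $S$ is an isometry of $\ell_p(\mathbb{Z})$ and attains its norm at every positive unit vector.

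\textbf{What you do prove, and comparison with the paper.} You prove the correct version of the theorem: $\|\Phi\|_{p\to p}=\|\Phi^*\|_{q\to q}\le 1$ always, with equality attained at $1/\|1\|_p$ when $\tau(1)<\infty$.

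Your route to the upper bound is the paper's: endpoint bounds at $p=1,\infty$, interpolation, then duality. Your endpoint estimates are sounder, though. The paper's $L_1$ step uses the identity $\tau|\Phi(x)|=\tau(\Phi(|x|))$, which fails for non-positive $x$: the completely depolarizing map on $M_2(\mathbb{C})$ sends $x=\mathrm{diag}(1,-1)$ to $0$. Your Russo--Dye argument applied to the positive unital maps $\Phi$ and $\Phi^*$ is correct.

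The paper's proof has the same hole you flagged, unacknowledged. It passes from the upper bounds $\|\Phi\|_{p\to p}\le\min\{1,\|\Phi^*\|_{q\to q}\}$ and $\|\Phi^*\|_{q\to q}\le 1$ directly to equality. It justifies attainment only by an unspecified appeal to interpolation.
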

\begin{proof}
By duality (see Preliminaries, Lemma on duality), for any $z\in L_p(\M)$,
\[
\norm{z}_p
=\sup\cbr{\,\abs{\tau(yz)}:\;y\in L_q(\M),\ \norm{y}_q=1\,}.
\]

Let $x\in L_p(\M)$.  Then
\[
\begin{split}
\norm{\Phi(x)}_p 
= \sup_{\norm{y}_q=1} \abs{\tau\del{y\,\Phi(x)}}
&= \sup_{\norm{y}_q=1} \abs{\tau\del{\Phi^*(y)\,x}} \\
&\le \sup_{\norm{y}_q=1} \norm{\Phi^*(y)}_q \,\norm{x}_p
= \norm{\Phi^*}_{q\to q}\,\norm{x}_p.
\end{split}
\]
Here we used:
\begin{enumerate}
  \item The duality pairing $\tau(y\,\Phi(x)) = \tau(\Phi^*(y)\,x)$.
  \item Hölder’s inequality $\abs{\tau(\Phi^*(y)x)}\le\norm{\Phi^*(y)}_q\norm{x}_p$.
\end{enumerate}
Hence
\[
\norm{\Phi}_{p\to p} \;\le\;\norm{\Phi^*}_{q\to q}.
\tag{1}
\]

Since $\Phi$ is trace‐preserving and positive,
\[
\norm{\Phi}_{1\to1}
= \sup_{\norm{x}_1=1}\norm{\Phi(x)}_1
= \sup_{\norm{x}_1=1}\tau\abs{\Phi(x)}
\;=\;\sup_{\norm{x}_1=1}\tau\del{\Phi(\abs{x})}
= \sup_{\norm{x}_1=1}\tau(\abs{x})
=1.
\]
Similarly, because $\Phi$ is unital and positive,
\[
\norm{\Phi}_{\infty\to\infty}
= \sup_{\norm{x}_\infty=1}\norm{\Phi(x)}_\infty
\;=\;\sup_{\norm{x}_\infty=1}\norm{\Phi(x)}_{\mathrm{op}}
\;\le\;\norm{\Phi(1)}_{\mathrm{op}}
=1,
\]
and positivity forces equality (e.g.\ test on $x=1$).  Thus
\[
\norm{\Phi}_{1\to1}
=\norm{\Phi}_{\infty\to\infty}
=1.
\tag{2}
\]

By the Riesz–Thorin interpolation theorem (Preliminaries, Theorem 2.5), for any linear map bounded on both $L_1(\M)$ and $L_\infty(\M)$, its $L_p$–operator norm satisfies
\[
\norm{\Phi}_{p\to p}
\le \norm{\Phi}_{1\to1}^{1-\theta} \,\norm{\Phi}_{\infty\to\infty}^{\theta}
=1^{1-\theta}\,1^{\theta}
=1,
\]
where $\theta\in(0,1)$ is chosen so that $\frac1p=\frac{1-\theta}1 + \frac{\theta}{\infty}$.  Combining with (1) gives
\[
\norm{\Phi}_{p\to p}
\;\le\;\min\cbr{1,\;\norm{\Phi^*}_{q\to q}}.
\]
An identical interpolation applied to $\Phi^*$ shows $\norm{\Phi^*}_{q\to q}\le1$.  Together,
\[
\norm{\Phi}_{p\to p}
= \norm{\Phi^*}_{q\to q}
=1.
\]
Finally, to see that the norm is actually attained on the unit sphere of positive elements, note that:
\begin{enumerate}
  \item Trace‐preservation implies $\norm{\Phi(x)}_1=\norm{x}_1$ whenever $x\ge0$ and $\norm{x}_1=1$.
  \item Unitality implies $\norm{\Phi^*(y)}_\infty = \norm{y}_\infty$ for $y\ge0$ with $\norm{y}_\infty=1$.
\end{enumerate}
By interpolation (see e.g.\ the proof of Riesz–Thorin), one can then choose a positive $x\in L_p(\M)$ with $\norm{x}_p=1$ so that $\norm{\Phi(x)}_p=1$ and similarly for $\Phi^*$.

This completes the proof.
\end{proof}

\begin{proposition}
\label{prop:strict_contraction}
Under the hypotheses of Theorem~\ref{thm:norm_bound}, $\norm{\Phi}_{p\to p}<1$ if and only if there exists a non-trivial projection $e\in\M$ such that
\[
\Phi(e)\le (1-\delta)e + \delta(1-e)
\]
for some $0<\delta<1$.  Equivalently, $\Phi$ has no invariant subspace isomorphic to $L_p(\mathcal{N})$ for a finite subalgebra $\mathcal{N}\subset\M$.
\end{proposition}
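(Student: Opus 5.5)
Before trying to prove this equivalence I would test it against Theorem~\ref{thm:norm_bound}, and I expect that test to show the statement is false as written. The plan is therefore to exhibit a counterexample rather than a proof.

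\emph{Step 1: the left-hand condition never holds.} Theorem~\ref{thm:norm_bound} states that every QDS map satisfies $\norm{\Phi}_{p\to p}=1$. When $\tau(1)<\infty$ this can be checked directly, without relying on the interpolation argument. Set $x=\tau(1)^{-1/p}\,1$, so that $\norm{x}_p=1$. Unitality gives $\Phi(x)=x$, hence $\norm{\Phi}_{p\to p}\ge 1$. Under these hypotheses the condition ``$\norm{\Phi}_{p\to p}<1$'' is never satisfied, at least in the finite case. So the equivalence can only hold if the projection condition also never holds.

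\emph{Step 2: the right-hand condition can hold.} Take $\M=M_2(\mathbb C)$ with $\tau=\mathrm{Tr}$ and the completely depolarizing map
\[
\Phi(x)=\tfrac12\,\mathrm{Tr}(x)\,1 .
\]
This map is completely positive, with Kraus operators $\tfrac{1}{\sqrt2}$ times the Pauli matrices and the identity, suitably normalized. It is also unital and trace-preserving, so it is QDS. Let $e$ be a rank-one projection and choose $\delta=\tfrac12$. Then
\[
\Phi(e)=\tfrac12\,1=\tfrac12\,e+\tfrac12\,(1-e),
\]
so the inequality in Proposition~\ref{prop:strict_contraction} holds, with equality. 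By Step 1, however, $\norm{\Phi}_{p\to p}=1$. The implication ``projection condition $\Rightarrow$ strict contraction'' therefore fails.

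\emph{Step 3: the ``equivalently'' clause.} The reformulation in terms of invariant subspaces $L_p(\mathcal N)$ is not equivalent either. For the map above, the range of $\Phi$ is $L_p(\mathbb C1)$, which is invariant and corresponds to a finite subalgebra. Hence the second formulation holds or fails independently of the first and of the norm condition.

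\emph{Main obstacle.} The real question is what the intended statement was. The most plausible corrected version is to replace $\norm{\Phi}_{p\to p}<1$ by strict contraction on the trace-zero part, $\norm{\Phi|_{L_p^0}}<1$. Even then, the projection criterion with a single projection $e$ is not evidently necessary or sufficient: for $p=2$ one would need a spectral-gap argument, and the condition on one projection does not control the whole orthocomplement of $1$. Absent such a reformulation, I would report the statement as false, with the example of Step 2 as the counterexample.
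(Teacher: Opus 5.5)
Your counterexample is correct: the statement is false as written, and the paper's proof cannot be repaired along its own lines.

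\textbf{Your counterexample.} In $M_2(\mathbb C)$ with $\tau=\mathrm{Tr}$, the map $\Phi(x)=\tfrac12\mathrm{Tr}(x)1$ is unital, trace-preserving and completely positive. Its Kraus operators are $\tfrac12$ times the identity and the three Pauli matrices. Any rank-one $e$ satisfies the projection inequality with equality at $\delta=\tfrac12$, while $\Phi(1)=1$ forces $\norm{\Phi}_{p\to p}=1$.

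\textbf{Where the paper's ($\Leftarrow$) argument fails.} Your example hits exactly the faulty step. The paper evaluates $\norm{(1-\delta)e+\delta(1-e)}_p$ as $\max\{1-\delta,\delta\}$. That is the operator norm, not the $L_p$-norm. It then uses $\norm{e}_p=1$ and concludes $\norm{\Phi}_{p\to p}\le\norm{\Phi(e)}_p/\norm{e}_p$. A single test vector only gives a \emph{lower} bound for an operator norm. In your example this ratio is $2^{1/p-1}<1$, while the norm $1$ is attained at $x=1$.

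\textbf{Where the paper's ($\Rightarrow$) argument fails.} It takes $r(\Phi)=\norm{\Phi}_{p\to p}$ ``by Gelfand's formula'', but Gelfand only gives $r(\Phi)\le\norm{\Phi}_{p\to p}$. It then converts a norm bound $\norm{\Phi(e)-e}_p\ge\delta$ into an operator inequality with no justification. When $\tau(1)<\infty$, its intermediate conclusion that $\Phi$ has no nonzero fixed point contradicts $\Phi(1)=1$. This matches your Step 1: in the finite case the hypothesis is vacuous.

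\textbf{Your Step 1 restriction.} You were right to confine Step 1 to finite trace, because for infinite trace Theorem~\ref{thm:norm_bound} is itself false. Take $\ell^\infty(F_2)$ with the counting-measure trace and the random-walk operator $(Pf)(g)=\tfrac14\sum_{s\in\{a^{\pm1},b^{\pm1}\}}f(gs)$. It is positive, unital and trace-preserving. By Kesten's theorem its norm on $L_2=\ell^2(F_2)$ is $\sqrt3/2$. So a finite-dimensional counterexample like yours is the clean way to refute the proposition.

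\textbf{A correction to Step 3.} When $\tau(1)<\infty$, every QDS map fixes $\mathbb C1=L_p(\mathbb C1)$. So ``no invariant copy of $L_p(\mathcal N)$'' is always false there, just like $\norm{\Phi}_{p\to p}<1$. Those two conditions are therefore trivially equivalent in the finite case. What your example refutes is the equivalence of the invariant-subspace clause with the projection criterion, not its independence from the norm condition.
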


\begin{proof}
We prove both directions in turn.

\medskip
\noindent
\emph{(\,$\Rightarrow$:)}  Assume $\norm{\Phi}_{p\to p}<1$.  Since $\Phi$ is a positive, trace‐preserving, unital map on the Banach lattice $L_p(\M)$, the spectrum $\sigma(\Phi)$ lies in the closed unit disk of $\mathbb C$, and the spectral radius $r(\Phi)=\norm{\Phi}_{p\to p}<1$ by Gelfand's formula.

By the standard spectral decomposition for a bounded operator on a Banach space (see \cite[Ch.~VII, \S1]{DunfordSchwartz1988}), the part of the spectrum on the circle $\{z:\abs{z}=r(\Phi)\}$ is isolated.  In particular, $1\notin\sigma(\Phi)$, so $1$ is contained in the resolvent set.  Thus the resolvent operator
\[
R(\lambda;\Phi) = (\lambda I - \Phi)^{-1}
\]
is analytic near $\lambda=1$, and the spectral projection
\[
P = \frac{1}{2\pi i} \int_{\abs{\lambda-1}=\varepsilon} R(\lambda;\Phi)\,d\lambda
\]
defines the projection onto the generalized $1$–eigenspace, which must be trivial since $1$ is not an eigenvalue.  Consequently, the subspace
\[
E = P\del{L_p(\M)}
\]
is zero or consists only of $0$.  Equivalently, there exists no nonzero $x\in L_p(\M)$ such that $\Phi(x)=x$.

By continuity of $\Phi$ and the Hahn–Banach theorem applied in the lattice structure, one finds a non-trivial lattice projection $e\in\M$ (i.e.\ $e^2=e=e^*$) and a constant $0<\delta<1$ such that
\[
\norm{\Phi(e)-e}_p \ge \delta>0.
\]
Rewriting yields the operator inequality
\[
\Phi(e) \le (1-\delta)e + \delta(1-e),
\]
as desired.

\medskip
\noindent
\emph{(\,$\Leftarrow$:) }Conversely, suppose there exists a non-trivial projection $e\in\M$ and $0<\delta<1$ with
\[
\Phi(e) \le (1-\delta)e + \delta(1-e).
\]
Since $e$ and $1-e$ are orthogonal projections, apply positivity and the lattice structure on $L_p(\M)$ to get
\[
\norm{\Phi(e)}_p \le \norm{(1-\delta)e + \delta(1-e)}_p
= \max\{1-\delta,\delta\}<1.
\]
But $\norm{e}_p=1$, so
\[
\norm{\Phi}_{p\to p} \le \frac{\norm{\Phi(e)}_p}{\norm{e}_p} <1.
\]
Therefore $\norm{\Phi}_{p\to p}<1$.

The equivalence of this condition to the absence of an invariant copy of $L_p(\mathcal{N})$ for a finite subalgebra $\mathcal{N}\subset\M$ follows by observing that such an invariant subspace would yield a non-trivial projection $e$ with $\Phi(e)=e$, contradicting strict contraction.

This completes the proof of Proposition \ref{prop:strict_contraction}.
\end{proof}

\begin{corollary}
A QDS map on a factor $\M$ with no minimal projections cannot be strictly contractive in norm: $\norm{\Phi}_{p\to p}=1$ for all $p$.
\end{corollary}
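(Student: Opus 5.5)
The plan is to read the corollary directly off Theorem~\ref{thm:norm_bound} and the endpoint estimates~(2) in its proof, rather than through the projection criterion of Proposition~\ref{prop:strict_contraction}. That theorem already gives $\norm{\Phi}_{p\to p}=1$ for every QDS map and every $1<p<\infty$. So for $1<p<\infty$ the factor hypothesis and the absence of minimal projections are not needed at all. The corollary is then the specialization of Theorem~\ref{thm:norm_bound} to this class of algebras, together with the endpoint cases $p=1$ and $p=\infty$.

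For the endpoints I would argue directly, without interpolation.
\begin{enumerate}
  \item \emph{Case $p=\infty$.} Unitality gives $\Phi(1)=1$, so $\norm{\Phi(1)}_\infty=\norm{1}_\infty=1$. Hence $\norm{\Phi}_{\infty\to\infty}\ge 1$.
  \item \emph{Case $p=1$.} Since $\tau$ is semifinite and $\M\neq 0$, there is a nonzero projection $e\in\M$ with $\tau(e)<\infty$. Put $x=e/\tau(e)$, so $x\ge0$ and $\norm{x}_1=1$. Positivity gives $\Phi(x)\ge0$, and then trace preservation gives
  \[
  \norm{\Phi(x)}_1=\tau(\Phi(x))=\tau(x)=1 .
  \]
  Hence $\norm{\Phi}_{1\to1}\ge1$.
\end{enumerate}
Combined with the upper bounds in~(2), both endpoint norms equal $1$. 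The same normalized element $x$ also witnesses the lower bound for $1<p<\infty$: for $x\ge0$ with $\norm{x}_1=1$, trace preservation shows the norm cannot drop below $1$ on the positive part of $L_1$. Interpolating the two endpoint lower bounds is not needed, because Theorem~\ref{thm:norm_bound} already supplies equality for $1<p<\infty$.

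For completeness I would add one remark tying the statement to Proposition~\ref{prop:strict_contraction}. In a factor without minimal projections one might hope to show that no projection $e$ satisfies $\Phi(e)\le(1-\delta)e+\delta(1-e)$, for instance by halving projections inside a diffuse factor. However, this route depends on the ``$\Rightarrow$'' direction of that proposition, whose argument is delicate. It is also logically redundant here, so I would not rely on it.

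The main obstacle is not technical but interpretive. The hypothesis ``factor with no minimal projections'' plays no role in the argument, and the conclusion holds for every QDS map on every semifinite $\M$. I would say this explicitly in the write-up, so that the corollary is read as a restatement of Theorem~\ref{thm:norm_bound} rather than as a genuinely new consequence of the diffuseness of $\M$.
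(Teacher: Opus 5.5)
\textbf{The gap: the lower bound for $1<p<\infty$.} Your endpoint arguments at $p=1$ and $p=\infty$ are fine. The gap is the lower bound $\norm{\Phi}_{p\to p}\ge 1$ for $1<p<\infty$, which is the real content of the corollary. The paper gives no separate proof of the corollary, and you are right not to route it through Proposition~\ref{prop:strict_contraction}.

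You import the bound from Theorem~\ref{thm:norm_bound}, but that theorem proves only the upper bound. Riesz--Thorin transfers upper bounds, not lower ones, so its closing claim that ``by interpolation'' some positive unit vector is not shrunk has no support.

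Your own substitute does not work either. For $x\ge0$, trace preservation gives $\norm{\Phi(x)}_1=\norm{x}_1$, but it says nothing about $\norm{\Phi(x)}_p$. For example, take $\Phi(x)=\frac{\mathrm{Tr}(x)}{n}1$ on $M_n(\mathbb{C})$ and a rank-one projection $e$. Then $\norm{\Phi(e)}_p=n^{1/p-1}<1=\norm{e}_p$.

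\textbf{The repair when $\tau(1)<\infty$.} What you need is a nonzero element of $L_p$ that $\Phi$ does not shrink. If $\tau(1)<\infty$, in particular for a II$_1$ factor, the unit works: $1\in L_p(\M)$ and $\norm{\Phi(1)}_p=\norm{1}_p$. Combined with $\norm{\Phi}_{p\to p}\le1$, this proves the corollary in that case. There, as you suspected, neither the factor property nor diffuseness is used.

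For the $L_1$ upper bound, argue by duality. $\Phi^*$ is positive and unital, so $\norm{\Phi}_{1\to1}=\norm{\Phi^*}_{\infty\to\infty}=\norm{\Phi^*(1)}=1$ by Russo--Dye. Do not rely on the identity $\tau\abs{\Phi(x)}=\tau(\Phi(\abs{x}))$ used in~(2), which is false in general.

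\textbf{The statement fails when $\tau(1)=\infty$.} In that case there may be no such element. So your claim that the conclusion holds for every QDS map on every semifinite $\M$ is wrong, and so is the equality in Theorem~\ref{thm:norm_bound}.

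A commutative example: simple random walk on the $3$-regular tree is a positive, unital, trace-preserving map on $\ell_\infty(V)$ with counting measure. By Kesten's theorem its $\ell_2$-norm is $2\sqrt2/3$.

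For a II$_\infty$ factor, let $G=\mathbb{F}_2$ be free on $a,b$, put $S=\{a^{\pm1},b^{\pm1}\}$, and let $N$ be a II$_1$ factor. Set $\M=N\bar{\otimes}\B(\ell_2(G))$ and $\Phi(x)=\frac14\sum_{s\in S}u_sxu_s^*$ with $u_s=1\otimes\lambda_s$.
\begin{itemize}
  \item This mixed-unitary channel is QDS.
  \item On $L_2(\M)\cong L_2(N)\otimes\ell_2(G\times G)$ it acts as $1\otimes\frac14\sum_{s\in S}\lambda_s\otimes\lambda_s$.
  \item The diagonal action of $G$ on $G\times G$ is free, so this operator is an amplification of $\frac14\sum_{s\in S}\lambda_s$. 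Kesten then gives $\norm{\Phi}_{2\to2}=\sqrt3/2<1$.
  \item Interpolating with the endpoint bounds gives $\norm{\Phi}_{p\to p}<1$ for every $1<p<\infty$.
\end{itemize}
The corollary, and your proof of it, can only be salvaged under a finiteness assumption on $\tau$.
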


Let $\M=M_n(\mathbb C)$ and define
\[
\Phi_t(x) = t\,x + (1-t)\frac{\mathrm{Tr}(x)}{n}1,
\qquad 0\le t\le1.
\]
Then $\Phi_t$ is QDS, with $\norm{\Phi_t}_{p\to p}=1$ for all $p$, but it is strictly contractive on traceless part: $\norm{\Phi_t|_{\ker\tau}}_{p\to p}=t$. 

The contraction factor $t$ in the depolarizing channel corresponds to the second-largest singular value of the Choi matrix, illustrating the link between spectral gap and norm strictness. \medskip

Now, let $\M=\bigoplus_{k=1}^\infty M_{n_k}$ with weights $\lambda_k>0$, $\sum\lambda_k=1$, and define
\[
\Phi(x_1\oplus x_2\oplus\cdots)= \bigoplus_{k,\ell}\lambda_\ell\,x_\ell.
\]
Then $\Phi$ is QDS, and $\norm{\Phi}_{p\to p}=1$, with strict contraction on the off-diagonal summands.

\section{Compactness and Closure in Schatten Ideals}
\label{sec:compact}

In this section we characterize compact quantum doubly stochastic (QDS) maps within the Schatten--$p$ classes and analyze closure properties under various operator topologies.

Let $\M$ be represented on a separable Hilbert space $\mathcal{H}$.  For $1\le p<\infty$, the \emph{Schatten $p$-class} $S_p(\M)$ consists of all compact operators $x$ affiliated to $\M$ for which
\[
\norm{x}_{S_p} = \del{\mathrm{Tr}\del{\,\abs{x}^p}}^{1/p}<\infty,
\]
where Tr is the canonical trace on $\mathcal{B}(\mathcal{H})$ restricted to $\M$.  Write $S_\infty(\M)=\mathbb{K}(\mathcal{H})$ See {\cite{Simon1979}}.

\begin{lemma}[See {\cite{Pietsch1980}}]
A bounded linear map $T:S_p(\M)\to S_p(\M)$ is compact if and only if there exists a family of bounded linear functionals $\{f_i\}\subset (S_p)^*$ with $\sum_i\norm{f_i}^{p'}<\infty$ such that
\[
\norm{T(x)}_{S_p} \le \sum_i \abs{f_i(x)},
\qquad \forall x\in S_p(\M).
\]
Here $1/p+1/p'=1$.
\end{lemma}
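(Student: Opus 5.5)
The plan is to test the two directions separately, and I expect the plan to end in a counterexample rather than a proof. The key observation is that, for $1<p<\infty$ (so $1<p'<\infty$), the summability condition $\sum_i\norm{f_i}^{p'}<\infty$ is almost no restriction once each functional may be repeated. If $g$ is a bounded functional and $m\in\N$, replacing $g$ by $m$ copies of $g/m$ leaves $\sum\abs{\cdot(x)}$ unchanged, since the copies contribute $m\cdot\abs{g(x)}/m=\abs{g(x)}$. Their contribution to the $p'$-sum, however, becomes $m\cdot(\norm{g}/m)^{p'}=\norm{g}^{p'}m^{1-p'}$, which tends to $0$ as $m\to\infty$ because $p'>1$.

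\emph{Necessity.} First I would show that the "only if" direction holds for every bounded $T$, compact or not. Since $\mathcal H$ is separable, $S_p(\M)$ is separable and, for $1<p<\infty$, reflexive with separable dual $S_{p'}$. Choose a sequence $(g_k)$ in the unit ball of $(S_p)^*$ that is norming for $S_p$, i.e. $\norm{z}_{S_p}=\sup_k\abs{g_k(z)}$. Put $h_k=g_k\circ T$, so that $\norm{h_k}\le\norm{T}$ and
\[
\norm{T(x)}_{S_p}=\sup_k\abs{h_k(x)}\le\sum_k\abs{h_k(x)}.
\]
Next, split each $h_k$ into $m_k$ copies of $h_k/m_k$, choosing $m_k$ so large that $\norm{T}^{p'}m_k^{1-p'}\le 2^{-k}$. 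The resulting family $\{f_i\}$ satisfies $\sum_i\norm{f_i}^{p'}\le 1$ and the displayed domination, with the same right-hand side as before.

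\emph{Sufficiency fails.} The same construction applied to $T=\mathrm{id}$ gives a family with $\sum_i\norm{f_i}^{p'}<\infty$ and $\norm{x}_{S_p}\le\sum_i\abs{f_i(x)}$ for all $x$. The identity on $S_p(\M)$ is not compact whenever $S_p(\M)$ is infinite-dimensional, for instance when $\M=\B(\mathcal H)$ with $\dim\mathcal H=\infty$. So the "if" direction is false as stated. Had the hypothesis been the $\ell_1$-condition $\sum_i\norm{f_i}<\infty$, the right-hand side would be a norm-convergent sum of rank-one seminorms, and a truncation argument would give compactness. The $\ell_{p'}$-condition does not permit this, precisely because of the splitting trick. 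For $p=1$ we have $p'=\infty$, and the expression $\sum_i\norm{f_i}^{\infty}$ has no meaning, so the statement must be read for $1<p<\infty$ only.

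The main obstacle is therefore not a step of the proof but the statement itself. My plan is to present the necessity argument, which is valid and in fact trivial, and then give the identity counterexample. I would then propose the corrected formulation with $\sum_i\norm{f_i}<\infty$, equivalently domination of $\norm{T(x)}$ by $\sup_k\abs{h_k(x)}$ with $\norm{h_k}\to0$ (Grothendieck's characterization applied to the compact set $T^*(\text{unit ball})$). The author's lemma should be replaced by this before it is used later for compactness of QDS maps.
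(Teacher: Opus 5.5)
The paper gives no proof of this lemma; it only attributes it to \cite{Pietsch1980}. So there is no argument to compare yours with, and your proposal has to be judged on its own terms. Under the literal reading, where $\sum_i\abs{f_i(x)}$ is a series of nonnegative terms with values in $[0,\infty]$, your refutation is sound. The splitting trick is correct, and $T=\mathrm{id}$ on $S_p(\B(\mathcal H))$ does refute the ``if'' direction. The norming sequence exists by separability of $S_p(\M)$ alone: take a dense sequence and Hahn--Banach norming functionals for it. You need neither reflexivity nor the identification of the dual with $S_{p'}$, which in general is only a quotient of $S_{p'}(\mathcal H)$. Two points need fixing.

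First, your whole analysis depends on allowing $\sum_i\abs{f_i(x)}=+\infty$, and you should say so explicitly. Under the equally natural reading in which the series must converge for every $x$, the ``if'' direction is actually \emph{true} for $1<p<\infty$, and the $p'$-condition plays no role.
\begin{itemize}
\item The map $A\colon x\mapsto (f_i(x))_i$ then goes from $S_p(\M)$ into $\ell_1$; only countably many $f_i$ are nonzero.
\item $A$ has closed graph, hence is bounded.
\item Since $S_p(\M)$ is reflexive and $\ell_1$ has the Schur property, $A$ sends bounded sequences to sequences with norm-convergent subsequences.
\item The estimate $\norm{T(x)-T(x')}_{S_p}\le\norm{A(x-x')}_{\ell_1}$ transfers this property to $T$, so $T$ is compact.
\end{itemize}
In particular, your family for the identity necessarily has $\sum_i\abs{f_i(x)}=\infty$ for some $x$. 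Your ``necessity for every bounded $T$'' relies on divergent sums in the same way. The accurate conclusion is that the statement is false as literally written; under the convergent reading, any defect could only lie in the ``only if'' direction.

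Second, your proposed correction conflates two conditions that are not equivalent.
\begin{itemize}
\item The domination $\norm{T(x)}\le\sup_k\abs{h_k(x)}$ with $\norm{h_k}\to0$ does characterize compactness. One direction is Grothendieck's theorem applied to $T^*(\text{ball})$; the other is compactness of $x\mapsto(h_k(x))\in c_0$.
\item The $\ell_1$-condition $\sum_i\norm{f_i}<\infty$ is sufficient, as you say, but it is not necessary.
\end{itemize}
Here is a counterexample to necessity. On the Hilbert space $S_2(\B(\mathcal H))$, take $T$ diagonal in an orthonormal basis $(u_k)$ with $Tu_k=\lambda_k u_k$ and $\lambda_k=1/\log(k+1)$; this $T$ is compact. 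Suppose $\norm{Tx}_2\le\sum_i\abs{f_i(x)}$ for all $x$. Average over $x=\sum_{j\le k}\gamma_j u_j$ with i.i.d.\ standard Gaussians $\gamma_j$. The left side has expectation at least $\lambda_k\,\mathbb E\norm{x}_2\ge\lambda_k\sqrt{k/3}$. On the right, $\mathbb E\abs{f_i(x)}\le\norm{f_i}$ for each $i$. Hence $\lambda_k\sqrt{k/3}\le\sum_i\norm{f_i}$ for every $k$, which is impossible for this $T$. The replacement lemma should therefore be stated with the null-sequence (sup) condition, not with $\sum_i\norm{f_i}<\infty$.
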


\begin{theorem}
\label{thm:compact_criterion}
Let $\Phi:L_p(\M)\to L_p(\M)$ be QDS and assume $\Phi$ restricts to $S_p(\M)\subset L_p(\M)$.  Then $\Phi|_{S_p}$ is compact if and only if for every $\varepsilon>0$, there exists a finite-rank projection $e\in\M$ such that
\[
\sup_{\norm{x}_{S_p}=1}\norm{\Phi((1-e)x(1-e))}_{S_p}<\varepsilon.
\]
\end{theorem}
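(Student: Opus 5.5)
The plan is to prove the two implications separately, working with the compression maps $C_e(x)=(1-e)x(1-e)$ and $D_e(x)=exe$ for finite-rank projections $e\in\M$. Two structural facts drive everything. First, $D_e$ has finite rank, since its range lies in $eS_p(\M)e$, which is finite-dimensional. Second, $C_e\to 0$ strongly on $S_r(\M)$ for every $1\le r<\infty$ as $e\uparrow 1$ along an increasing net of finite-rank projections. By separability I would take an increasing sequence $e_n\uparrow 1$ in $\M$; I am assuming such a sequence exists, which is implicit in the statement since it speaks of finite-rank projections in $\M$. Strong convergence upgrades to uniform convergence on norm-compact sets, because the $C_{e_n}$ are uniformly bounded by $1$.

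\emph{Necessity.} Suppose $T=\Phi|_{S_p}$ is compact.
\begin{enumerate}
  \item Pass to the Banach adjoint $T^*$ on $S_q$ with $1/p+1/q=1$, using the trace pairing $\tau(yx)$ from the Preliminaries. By Schauder's theorem, $T^*$ is compact.
  \item Since $C_e$ is self-adjoint for this pairing, $\|T C_e\|_{p\to p}=\|C_e T^*\|_{q\to q}$.
  \item The set $K=\overline{T^*(\mathrm{Ball}\,S_q)}$ is norm-compact, so $\sup_{y\in K}\|C_{e_n}y\|_q\to0$. Hence for large $n$ the supremum in the statement falls below $\varepsilon$.
\end{enumerate}
This uses $q<\infty$, i.e.\ $p>1$. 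For $p=1$ one has $q=\infty$ and $C_e\not\to0$ strongly on $S_\infty$ in this form. Here I would instead argue directly: $T(\mathrm{Ball})$ is relatively compact, so its elements are uniformly approximable by finite-rank corners, and one then uses trace preservation and positivity of $\Phi$ to transfer smallness from the output side to the input corners. I flag this endpoint as needing separate care.

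\emph{Sufficiency.} Fix $\varepsilon$ and choose $e$ as in the hypothesis. Decompose
\[
x=exe+ex(1-e)+(1-e)xe+(1-e)x(1-e).
\]
Then $\Phi(exe)$ is finite rank in $x$, and $\Phi((1-e)x(1-e))$ has norm at most $\varepsilon\|x\|_p$. If the off-diagonal terms can also be controlled, $T$ is a norm limit of finite-rank operators plus an $O(\varepsilon)$ error, and hence compact.

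\emph{Off-diagonal terms (the main obstacle).} The pieces $x\mapsto \Phi(ex(1-e))$ are neither finite rank nor obviously small. I would try the following.
\begin{enumerate}
  \item Use complete positivity, which the paper allows as the standing quantum assumption. For $a=x_1^{1/2}e$ and $b=x_2^{1/2}(1-e)$, write the block matrix
  \[
  \begin{pmatrix}\Phi(a^*a)&\Phi(a^*b)\\ \Phi(b^*a)&\Phi(b^*b)\end{pmatrix}\ge0.
  \]
  \item This gives a factorization $\Phi(a^*b)=\Phi(a^*a)^{1/2}\,k\,\Phi(b^*b)^{1/2}$ with $\|k\|\le1$.
  \item Apply Hölder with exponents $2p,2p$ to get $\|\Phi(a^*b)\|_p\le\|\Phi(a^*a)\|_p^{1/2}\,\|\Phi(b^*b)\|_p^{1/2}$.
  \item The second factor involves only the $(1-e)$-corner, so it is $\le(\varepsilon\|x\|_p)^{1/2}$. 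The first factor is bounded by $\|\Phi\|\le1$ (Theorem~\ref{thm:norm_bound}).
  \item Expand a general $x$ into four positive parts (polarization), so that each off-diagonal piece is $O(\varepsilon^{1/2})\|x\|_p$.
\end{enumerate}
Thus $\|T-\Phi\circ D_e\|\le C\varepsilon^{1/2}$, and $T$ is compact as a norm limit of finite-rank maps.

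The delicate point is making the polarization compatible with the corner structure. The positive parts of $x$ are not themselves compressed to $e$ and $1-e$, so the bookkeeping needs care. If that fails, my fallback is to strengthen the hypothesis to include the mixed corners $ex(1-e)$.
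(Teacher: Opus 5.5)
For $1<p<\infty$ your argument is a correct proof. At the decisive points it differs from the paper's proof, and in fact repairs it.

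\emph{Necessity.} The paper only asserts that compactness lets one choose $e$ ``so large that'' the tail is small. It leans on a claimed symmetry $\norm{(1-e)xe}_{S_p}=\norm{ex(1-e)}_{S_p}$, which fails for every nonzero $x\in eS_p(\M)(1-e)$. Your duality argument actually proves the direction: $\norm{TC_e}=\norm{C_eT^*}$, Schauder, and uniform convergence $C_{e_n}\to0$ on the compact set $\overline{T^*(\mathrm{Ball}\,S_q)}$. You also do not need $e_n\uparrow1$. If $z$ is the supremum of the finite-rank projections of $\M$, then $y=zyz$ for every $y\in S_q(\M)$. So any increasing sequence of finite-rank $e_n\in\M$ with $e_n\uparrow z$ already gives $C_{e_n}\to0$ strongly on $S_q(\M)$.

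\emph{Sufficiency.} You share the paper's four-corner decomposition and the finite-rank approximant $\Phi\circ D_e$. The paper, however, disposes of the mixed corners with $\norm{\Phi(ex(1-e)+(1-e)xe)}_{S_p}\le2\norm{\Phi((1-e)x(1-e))}_{S_p}$, which positivity does not give: for $x\in eS_p(\M)(1-e)$ the right side vanishes, and already $\Phi=\mathrm{id}$ violates it. Your Cauchy--Schwarz bound with its $\varepsilon^{1/2}$ loss is the right replacement.

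The ``delicate point'' you flag is not one, because compression is linear. Write $x=\sum_{k=0}^{3}i^kx_k$ with $x_k\ge0$ and $\norm{x_k}_p\le\norm{x}_p$. Run the block-matrix argument for each $k$ with $a=x_k^{1/2}e$ and $b=x_k^{1/2}(1-e)$, using the same $x_k$ in both, so that $a^*a=ex_ke$, $b^*b=(1-e)x_k(1-e)$ and $a^*b=ex_k(1-e)$. This gives $\norm{\Phi(ex(1-e))}_p\le4\varepsilon^{1/2}\norm{x}_p$ with no change to the hypothesis.

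Your version uses $2$-positivity, whereas the paper's definition only insists on positivity. To avoid it, set $B=ex(1-e)$, so that $\abs{B^*}=e\abs{B^*}e$ and $\abs{B}=(1-e)\abs{B}(1-e)$. Apply $\Phi$ to $\pm(B+B^*)\le t\abs{B^*}+t^{-1}\abs{B}$, do the same for $iB$, and optimize in $t$. In particular, for $1<p<\infty$ nothing about $\Phi$ beyond boundedness and positivity is used.

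The genuine gap is the endpoint $p=1$, which the paper's proof does not address either. The plan you sketch there cannot succeed. Once $S_1(\M)$ is infinite-dimensional, the tail condition fails for every finite-rank $e\in\M$ and every $\varepsilon\le1$. Indeed, there is a nonzero $0\le y=(1-e)y(1-e)$ in $S_1(\M)$, e.g.\ $y=(1-e)g(1-e)$ for a finite-rank projection $g\in\M$ of larger rank than $e$. Trace preservation then gives $\norm{\Phi(y)}_1=\mathrm{Tr}(y)=\norm{y}_1$.

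So there is no smallness to transfer to the input corners. What has to be shown instead is that $\Phi|_{S_1}$ is not compact, and this is where unitality enters.
\begin{enumerate}
\item Relative compactness of $\Phi(\mathrm{Ball}\,S_1)$ would give a finite-rank projection $f$ with $\mathrm{Tr}(f\Phi(y)f)\ge(1-\varepsilon)\mathrm{Tr}(y)$ for all $0\le y\in S_1(\M)$.
\item Apply this to finite-rank projections $e_n\in\M$ with $\mathrm{Tr}(e_n)\to\infty$, and use $f\Phi(e_n)f\le f\Phi(1)f=f$.
\item This yields $(1-\varepsilon)\mathrm{Tr}(e_n)\le\mathrm{Tr}(f)$, a contradiction.
\end{enumerate}
Hence at $p=1$ both sides are false and the equivalence holds vacuously.

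Unitality must be read here as $\Phi(1)=1$ for the extension of $\Phi$ to $\M$, not merely as $\Phi^*(1)=1$. The map $x\mapsto\mathrm{Tr}(x)\rho$ is trace-preserving, completely positive and of rank one on $S_1$, yet it violates the tail condition.
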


\begin{proof}
Recall from Lemma 3.2 (Pietsch Criterion) that a bounded linear operator
\(T:S_p(\M)\to S_p(\M)\) is compact if and only if there exist functionals
\(\cbr{f_i}\subset (S_p)^*\) with
\(\sum_i\norm{f_i}^{p'}<\infty\) and
\[
\norm{T(x)}_{S_p} \le \sum_i \abs{f_i(x)},
\quad \forall x\in S_p(\M).
\]
In our setting, take $T=\Phi|_{S_p}$.  We prove both directions of the equivalence.

\medskip
\noindent
\emph{(\,$\Rightarrow$:)~Necessity.}
Assume $\Phi|_{S_p}$ is compact.  Then the image of the unit ball
\( \cbr{x\in S_p:\norm{x}_{S_p}\le1} \)
is relatively compact in $S_p$.  Hence for any $\varepsilon>0$, there exists
\emph{finite-rank} projections $e_1,\dots,e_N\in\M$ such that the finite-dimensional
subspace
\(\mathrm{ran}(e)=\bigoplus_{j=1}^N e_j S_p e_j\)
approximates the image uniformly: for every $x$ with $\norm{x}_{S_p}=1$, write
\[
x = e x e + (1-e)x e + e x (1-e) + (1-e)x(1-e),
\]
where $e=\sum_{j=1}^N e_j$.  Since $\mathrm{ran}(e)$ is finite-dimensional,
$\Phi(e x e)$ ranges over a precompact set.  Meanwhile, positivity and the doubly
stochastic property give symmetry:
\[
\norm{(1-e)x e}_{S_p} = \norm{e x (1-e)}_{S_p}.
\]
Hence it suffices to control the "tail" contribution $\Phi((1-e)x(1-e))$.  By compactness,
one can choose $e$ so large that
\[
\norm{\Phi((1-e)x(1-e))}_{S_p} < \varepsilon
\quad \text{for all }\norm{x}_{S_p}=1.
\]
This establishes the necessity of the stated tail condition.

\medskip
\noindent
\emph{(\,$\Leftarrow$:)~Sufficiency.}
Conversely, assume that for every $\varepsilon>0$, there exists a finite-rank
projection $e\in\M$ such that
\[
\sup_{\norm{x}_{S_p}=1}\norm{\Phi((1-e)x(1-e))}_{S_p}<\varepsilon.
\]
Define the finite-rank operator $T_e:S_p\to S_p$ by
\[
T_e(x) = \Phi\del{e x e}.
\]
Since $e$ has finite rank and $\Phi$ is bounded, $T_e$ is finite-rank.
Moreover, for any $x$ with $\norm{x}_{S_p}=1$, we have the decomposition
\[
\Phi(x) = T_e(x) + \Phi\del{(1-e)x(1-e)} + \Phi\del{e x(1-e)+(1-e)x e}.
\]
By positivity and symmetry,
\[
\norm{\Phi(e x(1-e)+(1-e)x e)}_{S_p}
\le 2\,\norm{\Phi((1-e)x(1-e))}_{S_p}
< 2\varepsilon.
\]
Hence
\[
\norm{\Phi(x)-T_e(x)}_{S_p} \le \norm{\Phi((1-e)x(1-e))}_{S_p} + 2\varepsilon < 3\varepsilon.
\]
Since $T_e$ is finite-rank and $\varepsilon$ was arbitrary, it follows that
$\Phi|_{S_p}$ can be approximated in operator norm by finite-rank operators,
i.e.\ it is compact.

This completes the proof of Theorem \ref{thm:compact_criterion}.
\end{proof}

In particular, if $\M=M_n(\mathbb{C})$ is finite-dimensional, every QDS map on $M_n$ is trivially compact on $S_p(M_n)$, illustrating the necessity of infinite dimensions for non-trivial compactness phenomena.

Let $\Phi_\alpha$ be a net of bounded maps on $\B(\mathcal H)$.  We consider:
\begin{enumerate}
  \item \emph{Strong operator topology (SOT):} $\Phi_\alpha\to\Phi$ if $\norm{\Phi_\alpha(x)-\Phi(x)}_{2}\to0$ for all $x\in S_2(\mathcal H)$.
  \item \emph{Weak operator topology (WOT):} $\Phi_\alpha\to\Phi$ if $\mathrm{Tr}(y^*\Phi_\alpha(x))\to\mathrm{Tr}(y^*\Phi(x))$ for all $x,y\in S_2(\mathcal H)$.
\end{enumerate}

\begin{theorem}
\label{thm:closure}
Let $\{\Phi_\alpha\}$ be a net of QDS maps on $L_p(\M)$ converging in SOT (resp.\ WOT) to a bounded map $\Phi$.  Then $\Phi$ is QDS.
\end{theorem}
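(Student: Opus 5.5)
The plan is to verify the three defining properties of a QDS map (positivity, trace preservation, unitality) for the limit $\Phi$ one at a time. Since SOT convergence implies WOT convergence, it suffices to treat the WOT case. Throughout I use two facts. First, by Theorem~\ref{thm:norm_bound} every $\Phi_\alpha$ satisfies $\norm{\Phi_\alpha}_{p\to p}\le 1$ and $\norm{\Phi_\alpha^*}_{q\to q}\le 1$, so the net is uniformly bounded. Second, by this uniform bound, convergence of the pairings $\tau(y\,\Phi_\alpha(x))$ on a dense set of $x,y$ (for instance $x,y\in L_1\cap \M$, which lie in every $L_r$) gives convergence on all of $L_p\times L_q$ by an $\varepsilon/3$ argument. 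In particular $\Phi_\alpha^*\to\Phi^*$ in the weak sense on $L_q$, so every statement proved for $\Phi$ applies symmetrically to $\Phi^*$.

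\emph{Positivity.} Take $x\ge 0$ in $L_p(\M)$ and $y\ge0$ in $L_q(\M)$. Then $\tau(y\,\Phi_\alpha(x))\ge0$ for every $\alpha$, and passing to the limit gives $\tau(y\,\Phi(x))\ge 0$. Since the positive cone of $L_p$ is the dual cone of $L_q^+$ under the trace pairing, this yields $\Phi(x)\ge0$. The same argument gives complete positivity: apply it to the amplifications on $M_n(\M)$ with the trace $\mathrm{tr}_n\otimes\tau$, noting that $(\Phi_\alpha)_n\to\Phi_n$ weakly entrywise. The same argument also shows that $\Phi^*$ is positive.

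\emph{Trace preservation and unitality.} I pass to the limit through finite-trace projections. Let $x\ge0$ lie in $L_1\cap L_p$ and let $e\in\M$ be a projection with $\tau(e)<\infty$. Then $e\in L_q$, so
\[
\tau(e\,\Phi(x))=\lim_\alpha \tau(e\,\Phi_\alpha(x))\le \lim_\alpha\tau(\Phi_\alpha(x))=\tau(x).
\]
Letting $e\uparrow 1$ and using normality of $\tau$ gives $\Phi(x)\in L_1$ and $\tau(\Phi(x))\le\tau(x)$. Dually, $\Phi^*(1)\le 1$, and the same argument applied to $\Phi^*$ yields $\Phi(1)\le 1$, that is, $\Phi$ is sub-unital.

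\emph{Main obstacle: the reverse inequalities.} The hard part is upgrading these sub-conservation statements to the equalities $\tau(\Phi(x))=\tau(x)$ and $\Phi(1)=1$. The risk is that mass can escape to infinity along the net, which WOT convergence does not see. My first attempt would use the complementary projection:
\[
\tau(x)-\tau(e\,\Phi_\alpha(x))=\tau\bigl((1-e)\Phi_\alpha(x)\bigr)=\tau\bigl(\Phi_\alpha^*(1-e)\,x\bigr).
\]
I would try to show this is small uniformly in $\alpha$ once $e$ is large, by approximating $x$ in $L_1$ by $fxf$ with $f$ a finite projection, and by using $0\le\Phi_\alpha^*(1-e)\le 1$. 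When $\tau(1)<\infty$ the problem disappears: $1\in L_q$, so $\tau(\Phi(x))=\tau(1\cdot\Phi(x))=\lim_\alpha\tau(\Phi_\alpha(x))=\tau(x)$ directly, and likewise $\Phi(1)=1$. In the genuinely semifinite case I expect this uniform tightness to be exactly where the argument needs care. If it cannot be obtained from the QDS structure alone, I would record it as the precise extra hypothesis under which the limit remains QDS.
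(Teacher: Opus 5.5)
\textbf{There is a genuine gap, and it cannot be closed.} Everything you actually prove is correct. This includes the dual-cone argument for positivity and complete positivity under WOT limits, the bounds $\tau(\Phi(x))\le\tau(x)$ for $x\ge0$ and $\Phi(1)\le 1$, and the full QDS conclusion when $\tau(1)<\infty$, where $1\in L_q$ makes $\tau=\tau(1\,\cdot\,)$ weakly continuous. The gap is the step you flag yourself: the reverse inequalities when $\tau(1)=\infty$. The uniform tightness you hope for does not follow from the QDS structure, and the statement is false in that case.

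Take $\M=\ell_\infty(\N)$ with counting trace, and let $\Phi_n$ be the trace-preserving conditional expectation onto functions constant on the blocks $B_k=\{(k-1)n+1,\dots,kn\}$. The same works in $\B(\ell^2)$ with $x\mapsto\sum_k n^{-1}\mathrm{Tr}(P_kxP_k)P_k$. Each $\Phi_n$ is positive, unital, trace-preserving and contractive on every $L_p$. If $x$ is supported in $\{1,\dots,m\}$ and $n\ge m$, then $\Phi_n(x)=\frac{\tau(x)}{n}\chi_{B_1}$, so $\norm{\Phi_n(x)}_p=\abs{\tau(x)}\,n^{1/p-1}\to0$ for $p>1$. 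By density and the uniform bound, $\Phi_n\to0$ in SOT, hence in WOT, on $L_p$ for $1<p<\infty$. The zero map is neither trace-preserving nor unital.

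Your proposed estimate fails visibly here. For any finite $e=\chi_F$ and $x=\delta_1$ one has $\tau\bigl(\Phi_n^*(1-e)x\bigr)\ge (n-\abs{F})/n\to1$: all the mass escapes. Unitality can fail even when the trace survives. The cyclic permutations $\delta_j\mapsto\delta_{j+1}$ ($j<n$), $\delta_n\mapsto\delta_1$, fixing $\delta_j$ for $j>n$, converge in SOT on every $L_p$, $1\le p<\infty$, to the unilateral shift. The shift is positive and trace-preserving, but its normal extension to $\M$ sends $1$ to $1-\delta_1$. So your sub-conservation statements are the correct general conclusion. The theorem as stated needs $\tau(1)<\infty$, or an explicit tightness hypothesis of the kind you describe.

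\textbf{Comparison with the paper.} The paper's proof does not get around this either. It obtains trace preservation from two claims: that $L_1$ embeds continuously into $L_p$, and that $\tau$ is continuous on $L_p$. It obtains unitality from $1\in L_p$. For $1<p<\infty$, continuity of $\tau$ on $L_p$ and $1\in L_p$ each hold exactly when $\tau(1)<\infty$. The embedding claim is false in general; for finite trace the inclusion is $L_p\subset L_1$. So in the finite-trace case the paper's argument reduces to your observation that $\tau(\cdot)=\tau(1\,\cdot\,)$ with $1\in L_p\cap L_q$. In the infinite-trace case it rests on false assertions, as the examples above confirm. Your positivity argument, which tests against $L_q^+$, is also sounder than the paper's appeal to Krein--Smulian in the WOT case, and it yields complete positivity at no extra cost.
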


\begin{proof}
Let $1\le p<\infty$ and fix a net $\{\Phi_\alpha\}$ of QDS maps on $L_p(\M)$ that converges to a bounded map $\Phi$ in the strong operator topology (SOT).  The WOT case is analogous with minor adjustments.

Since each $\Phi_\alpha$ is trace-preserving, for any $x\in L_1(\M)$ we have
\[
\tau\del{\Phi_\alpha(x)}=\tau(x).
\]
Because $L_1(\M)$ embeds continuously into $L_p(\M)$ (for $p>1$), and the SOT convergence implies $\Phi_\alpha(x)\to\Phi(x)$ in $L_p$-norm, we obtain by continuity of $\tau:L_p(\M)\to\mathbb C$:
\[
\tau\del{\Phi(x)}
= \lim_\alpha\tau\del{\Phi_\alpha(x)}
= \tau(x).
\]
Hence $\Phi$ is trace-preserving.

Unitality of $\Phi_\alpha$ means $\Phi_\alpha(1)=1$ in $\M\subset L_p(\M)$.  Under SOT convergence, for the fixed element $1\in M$,
\[
\Phi(1) = \lim_\alpha \Phi_\alpha(1) = \lim_\alpha 1 = 1.
\]
Thus $\Phi$ is unital.

We show $\Phi(x)\ge0$ whenever $x\ge0$.  Take $x\in L_p(\M)$ with $x\ge0$.  Then for each $\alpha$, $\Phi_\alpha(x)\ge0$.  Since the cone of positive elements in $L_p(\M)$ is closed in the SOT (and hence in norm), the limit $\Phi(x)=\lim_\alpha\Phi_\alpha(x)$ is also positive.

Alternatively, in the WOT case, positivity is preserved under WOT-limits by the Krein--Smulian theorem: the positive cone is WOT-closed in $\M$, and hence in $B(L_p,L_p)$ when tested on positive inputs.

Combining these three properties, we conclude that $\Phi$ is a QDS map.  This completes the proof of Theorem~\ref{thm:closure}.
\end{proof}

Let $\mathcal H=\ell^2(\mathbb{N})$ and $\M=\B(\mathcal H)$.  Define
\[
\Phi(x)=\mathrm{diag}(\langle x e_n,e_n\rangle)_{n\in\N},
\]
the map taking $x$ to its diagonal.  Then $\Phi$ is QDS on $L_p(\B(\mathcal H))$, and by Theorem~\ref{thm:compact_criterion}, $\Phi$ is compact on $S_p$ if and only if the diagonal entries tail off in $\ell^p$, illustrating infinite-dimensional restrictions. \medskip

As another case, let $S$ be the unilateral shift on $\ell^2$, and let
\[
\Phi(x)=\frac12(x+SxS^*).
\]
Then $\Phi$ is QDS on $L_p(\B(\mathcal H))$ but fails to be compact on $S_p$ since it preserves the shift orbit, providing a canonical non-compact example.

\section{Interpolation and Stability under Perturbations}
\label{sec:interpolation}

This section establishes interpolation stability of quantum doubly stochastic (QDS) maps and quantifies their robustness under perturbations.

\begin{theorem}
\label{thm:interp_stability}
Let $(\M,\tau)$ be a semifinite von Neumann algebra.  Suppose a linear map
\[\Phi:L_{p_0}(\M)\to L_{p_0}(\M),\quad \Phi:L_{p_1}(\M)\to L_{p_1}(\M)\]
is QDS for $1\le p_0<p_1\le\infty$ (with unital/tracial conditions interpreted at endpoints).  Then for every $0<\theta<1$, the interpolated map on
\[
L_p(\M)=\del{L_{p_0}(\M),L_{p_1}(\M)}_\theta,
\quad\text{where }1/p=(1-\theta)/p_0+\theta/p_1,
\]
is also QDS, and
\[\norm{\Phi}_{L_p\to L_p}=1.
\]
\end{theorem}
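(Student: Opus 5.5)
The plan has three parts: first bound the norm of the interpolated map by $1$, then check that positivity, trace preservation and unitality survive interpolation, and finally show the norm bound is attained. Throughout, we regard $\Phi$ as a single linear map defined on $L_{p_0}(\M)\cap L_{p_1}(\M)$ and extend it by density to $L_p(\M)$, using the Kosaki interpolation theorem from the Preliminaries.

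\textbf{Step 1: the upper bound.} By the endpoint case of Theorem~\ref{thm:norm_bound} (or, at $p_j\in\{1,\infty\}$, by the direct computation (2) in its proof), we have $\norm{\Phi}_{p_j\to p_j}=1$ for $j=0,1$. The interpolation estimate then gives
\[
\norm{\Phi}_{p\to p}\le \norm{\Phi}_{p_0\to p_0}^{1-\theta}\norm{\Phi}_{p_1\to p_1}^{\theta}=1 .
\]

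\textbf{Step 2: the QDS properties of the extension.} Each property will be checked on the dense intersection and then passed to $L_p(\M)$ by continuity.
\begin{itemize}
\item \emph{Positivity.} On $L_{p_0}\cap L_{p_1}$ the map is positive. Positive elements of $L_p$ can be approximated in norm by positive elements of the intersection, for instance by spectral truncations $x\chi_{(1/n,n)}(x)$. The positive cone of $L_p$ is norm closed, so positivity extends.
\item \emph{Trace preservation.} This identity only concerns elements of $L_1(\M)$, and on $L_1\cap L_{p_j}$ the interpolated map agrees with the original $\Phi$. Hence trace preservation is inherited directly.
\item \emph{Unitality.} The identity $\Phi(1)=1$ is likewise inherited from the endpoint (at $p_1=\infty$ one has $1\in\M$). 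More generally it can be phrased through the adjoint: the adjoint of the interpolated map is the interpolation of the adjoints on $(L_{q_0},L_{q_1})_\theta$. By duality, trace preservation of $\Phi$ becomes $\Phi^*(1)=1$, which persists.
\end{itemize}

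\textbf{Step 3: the lower bound $\norm{\Phi}_{p\to p}\ge 1$.} This is the step I expect to be the main obstacle, because nothing general forces interpolated norms to be attained. I would first try a positive test element. Take a projection $e$ with $\tau(e)<\infty$. By positivity and unitality, $0\le\Phi(e)\le 1$, and by trace preservation, $\tau(\Phi(e))=\tau(e)$. For $p\ge1$ and $0\le y\le 1$ we have $y^p\le y$, hence
\[
\norm{\Phi(e)}_p^p\le\tau(\Phi(e))=\tau(e)=\norm{e}_p^p .
\]
This inequality points the wrong way, so instead I would use duality. Pair $\Phi(e)$ with $e'=$ the support projection of $\Phi(e)$, or simply use the bound from Theorem~\ref{thm:norm_bound} applied to the interpolation exponent $p$ itself. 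Since the extension is QDS on $L_p$ by Step 2 and $1<p<\infty$, Theorem~\ref{thm:norm_bound} gives $\norm{\Phi}_{p\to p}=1$ directly. When $p_0=1$ and $p_1=\infty$ this covers every intermediate $p$. The delicate point to check is that Theorem~\ref{thm:norm_bound} indeed applies to the extended map, that is, that Step 2 really produces a QDS map in the sense of Section~\ref{sec:def_norm}.
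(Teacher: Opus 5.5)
\textbf{There is a genuine gap in Step 3, and it cannot be closed as stated.} Steps 1 and 2 are fine. Step 1 only needs the endpoint bounds $\norm{\Phi}_{p_j\to p_j}\le 1$, and your truncation argument for positivity is more careful than the paper's appeal to a ``positivity-preserving interpolation theorem''.

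Invoking Theorem~\ref{thm:norm_bound} in Step 3 only relocates the problem. That proof shows both norms are $\le 1$ and then asserts attainment ``by interpolation''. This is the same unproved heuristic the paper uses at this point (``interpolation of these extremal examples yields an element \dots\ that attains'' the norm).

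In fact the lower bound $\norm{\Phi}_{p\to p}\ge 1$ is false when $\tau(1)=\infty$. Let $\M=\ell^\infty(V)$ with counting trace, where $V$ is the vertex set of the $3$-regular tree, and let $\Phi=\tfrac13 A$ with $A$ the adjacency operator. Then $\Phi$ is (completely) positive, $\Phi(1)=1$, and $\sum_v(\Phi f)(v)=\sum_v f(v)$. So $\Phi$ is QDS at $p_0=1$ and $p_1=\infty$.

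Now fix an end of the tree, let $\pi(v)$ be the neighbour of $v$ towards it, and put $(Bf)(v)=f(\pi(v))$. Then $A=B+B^*$ and $B^*B=2$, since each vertex $u$ equals $\pi(v)$ for exactly two vertices $v$. Hence $\norm{\Phi}_{2\to2}\le 2\sqrt2/3<1$. Interpolating this against the endpoint bounds gives $\norm{\Phi}_{p\to p}<1$ for every $1<p<\infty$. This is Kesten's phenomenon; the simple random walk on $F_2$ works equally well. The same example refutes the equality claimed in Theorem~\ref{thm:norm_bound}, so you cannot lean on it.

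Your own computation was the right warning sign. For a finite projection $e$ you get $\norm{\Phi(e)}_p\le\norm{e}_p$, with equality only if $\Phi(e)$ is again a projection. Nothing forces approximate equality on any test element; in the example above every $x\in L_2$ satisfies $\norm{\Phi(x)}_2\le\tfrac{2\sqrt2}{3}\norm{x}_2$.

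What is true is the finite-trace case. If $\tau(1)<\infty$, then $1\in L_{p_0}\cap L_{p_1}$, the interpolated map agrees with $\Phi$ there, and $\norm{\Phi(1)}_p=\norm{1}_p=\tau(1)^{1/p}$. Together with Step 1 this gives $\norm{\Phi}_{p\to p}=1$. Without that hypothesis, the correct conclusion is only $\norm{\Phi}_{p\to p}\le 1$, together with the QDS properties from Step 2.

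The relevant condition is $\tau(1)<\infty$, not finiteness of $\M$ as a von Neumann algebra. The commutative example above is a finite algebra carrying an infinite trace.
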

\begin{proof}
Set up the interpolation scale via the complex method:
\[
\del{L_{p_0}(\M),L_{p_1}(\M)}_\theta \simeq L_p(\M),
\quad \frac1p=\frac{1-\theta}{p_0}+\frac{\theta}{p_1},
\quad 0<\theta<1.
\]
Let $T=\Phi$ viewed as a map on both endpoints.  We verify each QDS property.

Since $\Phi$ is QDS at $p_0$ and $p_1$, we have
\[
\norm{\Phi}_{p_0\to p_0}=1,
\quad
\norm{\Phi}_{p_1\to p_1}=1.
\]
By the Riesz--Thorin interpolation theorem (Theorem~2.5), it follows that
\[
\norm{\Phi}_{p\to p} \;\le\; 1^{1-\theta}\,1^{\theta} = 1.
\]
On the other hand, since $\Phi$ is trace-preserving at $p_0=1$ and unital at $p_1=\infty$, one can test on suitable positive elements to see that the norm cannot drop below 1.  More precisely:
\begin{enumerate}
  \item At the trace class end ($p_0=1$), for any $x\ge0$ with $\norm{x}_1=1$,
  \[\norm{\Phi(x)}_1 = \tau(\Phi(x)) = \tau(x) =1.\]
  \item At the operator norm end ($p_1=\infty$), for the identity $1$,
  \[\norm{\Phi(1)}_\infty = \norm{1}_\infty =1.\]
\end{enumerate}
Interpolation of these extremal examples yields an element in $L_p(\M)$ with unit norm that attains $\norm{\Phi}_{p\to p}=1$.

Let $x\in L_1(\M)\cap L_{p_1}(\M)$.  Since $x$ lies in the intersection of the endpoints, $x\in L_p(\M)$ by interpolation.  By assumption, for $p_0=1$,
\[\tau(\Phi(x)) = \tau(x).
\]
The interpolation framework ensures that the functional $x\mapsto \tau(\Phi(x)) - \tau(x)$ vanishes on a dense subspace of $L_p$, hence on all of $L_p$, so $\Phi$ is trace-preserving on $L_p$.  Similarly, unitality at $p_1=\infty$ (i.e.\ $\Phi(1)=1$) carries to the interpolated space.

Since each $\Phi\big|_{L_{p_i}}$ is positive, the family $\Phi$ defines a bounded analytic map on the strip $0\le\Re z\le1$ taking boundary values in the positive cones of $L_{p_0}$ and $L_{p_1}$.  By the positivity‐preserving interpolation theorem (see \cite[Ch.~5]{BerghLofstrom1976}), the interpolated operator $\Phi:L_p\to L_p$ is also positive, i.e. $x\ge0\implies \Phi(x)\ge0$.

Combining boundedness at unit norm, trace‐preservation, unitality, and positivity shows that $\Phi$ is indeed QDS on $L_p(\M)$ with $\norm{\Phi}_{p\to p}=1$.  This completes the proof.
\end{proof}

Theorem~\ref{thm:interp_stability} shows that doubly stochasticity is independent of the choice of $p$ in the interpolation scale, analogous to commutative results in \cite{BerksonGillespie1996}.  \medskip

To measure deviation from exact doubly stochasticity, we introduce the following:

For a bounded positive map $\Phi:L_p(\M)\to L_p(\M)$, define
\[
\delta_{\mathrm{tr}}(\Phi)=\sup_{\norm{x}_1=1}\abs{\tau(\Phi(x)-x)},
\quad
\delta_{\mathrm{un}}(\Phi)=\norm{\Phi(1)-1}_\infty.
\]

\begin{theorem}
\label{thm:perturb}
Let $1<p<\infty$.  Suppose $\Phi$ is QDS and $\Psi$ is a bounded positive map on $L_p(\M)$.  Then
\[
\norm{\Phi-\Psi}_{p\to p} \le C_p\del{\delta_{\mathrm{tr}}(\Psi)+\delta_{\mathrm{un}}(\Psi)}^{\alpha},
\]
where $\alpha=\min\{1/p,1/q\}$, $1/p+1/q=1$, and $C_p>0$ depends only on $p$.
\end{theorem}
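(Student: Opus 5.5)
The plan is to obtain the estimate by interpolating $T:=\Phi-\Psi$ between the endpoints $p=1$ and $p=\infty$, as in the proof of Theorem~\ref{thm:norm_bound}. The exponent $\alpha=\min\{1/p,1/q\}$ should come from the Riesz--Thorin bound
\[
\norm{T}_{p\to p}\le \norm{T}_{1\to1}^{1/p}\,\norm{T}_{\infty\to\infty}^{1/q},
\]
with $\theta$ chosen as in Theorem~\ref{thm:norm_bound}. One endpoint norm will be controlled linearly by the defect $\delta:=\delta_{\mathrm{tr}}(\Psi)+\delta_{\mathrm{un}}(\Psi)$, and the other only by an absolute constant. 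Depending on which endpoint carries the defect, this gives $\delta^{1/p}$ or $\delta^{1/q}$, and both are $\le\delta^{\alpha}$ once $\delta\le1$.

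\emph{Step 1 (the case $\delta\ge1$).} Here the estimate is a boundedness statement. By Theorem~\ref{thm:norm_bound}, $\norm{\Phi}_{p\to p}=1$. For the positive map $\Psi$, I would bound $\norm{\Psi}_{1\to1}$ by $\sup_{x\ge0,\norm{x}_1=1}\tau(\Psi(x))\le 1+\delta_{\mathrm{tr}}(\Psi)$, and $\norm{\Psi}_{\infty\to\infty}$ by $\norm{\Psi(1)}_\infty\le1+\delta_{\mathrm{un}}(\Psi)$, using the Russo--Dye type bound for positive maps. Interpolating gives $\norm{\Psi}_{p\to p}\le 1+\delta$, so $\norm{\Phi-\Psi}_{p\to p}\le 2+\delta\le 3\delta^{\max\{1/p,1/q\}}\cdot\delta^{1-\max\{1/p,1/q\}}$. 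This absorbs into $C_p\delta^\alpha$ only after an a priori bound on $\delta$ is fixed. So I would restrict to $\delta\le1$, which is the perturbative regime the statement is about, and treat larger $\delta$ by enlarging $C_p$.

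\emph{Step 2 (endpoint estimates for $\delta\le1$).} At $p=\infty$, positivity gives $\norm{T}_{\infty\to\infty}\le\norm{\Phi}_{\infty\to\infty}+\norm{\Psi}_{\infty\to\infty}\le 3$, and symmetrically $\norm{T}_{1\to1}\le3$. The real content is a \emph{linear} bound at one endpoint, for instance $\norm{T}_{1\to1}\le C\,\delta$. To attempt this, I would decompose $x\in L_1(\M)$ into four positive parts and reduce to $x\ge0$. I would then write $\norm{(\Phi-\Psi)(x)}_1\le\tau(\Phi(x))+\tau(\Psi(x))-2\tau(\Phi(x)\wedge\Psi(x))$, using a noncommutative substitute for the lattice minimum such as the Powers--St{\o}rmer or Fuchs--van de Graaf inequality. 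The goal would be to compare this with $\abs{\tau(\Psi(x)-x)}$. Dually, at $p=\infty$ one compares $\Psi(1)$ with $\Phi(1)=1$ via $\delta_{\mathrm{un}}(\Psi)$.

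\emph{Main obstacle.} Step 2 is where the argument must use a hypothesis tying $\Psi$ to $\Phi$, and as stated it does not hold. If $\Psi$ is itself QDS but different from $\Phi$ (for example $\Phi=\mathrm{id}$ and $\Psi=\Phi_t$ from the depolarizing example), then $\delta=0$ while $\norm{\Phi-\Psi}_{p\to p}>0$. So the defects of $\Psi$ alone cannot control $\Phi-\Psi$. What I would try first is to read $\Psi$ as a perturbation of $\Phi$ in a form such as $\Psi=\Phi+R$ with $R$ of positive or difference-of-positive type. Then $\tau\circ R$ and $R(1)$ are exactly the defects, and the positivity of the pieces of $R$ gives $\norm{R}_{1\to1}\lesssim\delta_{\mathrm{tr}}$ and $\norm{R}_{\infty\to\infty}\lesssim\delta_{\mathrm{un}}$. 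Under that reading, interpolation gives the sharper bound $\norm{\Phi-\Psi}_{p\to p}\lesssim\delta$, and hence the weaker claimed bound $\delta^\alpha$ for $\delta\le1$. Without such a structural hypothesis I expect Step 2, and therefore the theorem, to fail, and I would flag this rather than force the estimate.
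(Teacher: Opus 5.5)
Your diagnosis is correct: the statement is false as written. Your counterexample is valid once $n\ge2$ and $0\le t<1$. Then $\delta_{\mathrm{tr}}(\Phi_t)=\delta_{\mathrm{un}}(\Phi_t)=0$, while with matrix units $e_{ij}$ one has $(\mathrm{id}-\Phi_t)(e_{11}-e_{22})=(1-t)(e_{11}-e_{22})\neq0$. In fact any QDS $\Psi\neq\Phi$ is a counterexample, because the defects measure how far $\Psi$ is from satisfying the QDS conditions, not how far it is from $\Phi$.

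The paper uses exactly your skeleton: Riesz--Thorin for $E=\Psi-\Phi$ between $L_1$ and $L_\infty$. It breaks at precisely the endpoint estimates you flagged as the obstacle.
\begin{itemize}
\item At $p=1$ it writes $\|E(x)\|_1\ge|\tau(\Psi(x)-\Phi(x))|=|\tau(\Psi(x)-x)|\le\delta_{\mathrm{tr}}(\Psi)$ and concludes $\|E\|_{1\to1}\le\delta_{\mathrm{tr}}(\Psi)$. This is a non sequitur: a lower bound by a bounded quantity gives no upper bound, and $\tau\circ E$ vanishes for every difference of trace-preserving maps.
\item At $p=\infty$ it drops the term $\|x-\Phi(x)\|_\infty$, i.e.\ tacitly takes $\Phi=\mathrm{id}$. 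It then asserts $\|\Psi(x)-x\|_\infty\le\|\Psi(1)-1\|_\infty$ for $x\ge0$. This fails for $\Psi=\Phi_0$ and $x=e_{11}$, where the left side is $1-1/n$ and the right side is $0$.
\item Its second, ``swapped-exponent'' bound is also unavailable. Interpolating $(L_\infty,L_1)$ to reach $L_p$ reproduces the same exponents: $1/p$ on the $L_1$ norm and $1/q$ on the $L_\infty$ norm.
\end{itemize}

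Your proposed repair needs two corrections.

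First, ``difference-of-positive type'' is not enough. In your own counterexample, $R=\Phi_t-\mathrm{id}=(1-t)\Phi_0-(1-t)\mathrm{id}$ is a difference of positive maps with zero defects. You need $R$ to be one-signed ($R\ge0$ or $-R\ge0$). Then positivity and Russo--Dye give $\|R\|_{1\to1}\le\delta_{\mathrm{tr}}(\Psi)$ and $\|R\|_{\infty\to\infty}=\|R(1)\|_\infty=\delta_{\mathrm{un}}(\Psi)$. Hence $\|\Phi-\Psi\|_{p\to p}\le\delta_{\mathrm{tr}}(\Psi)^{1/p}\delta_{\mathrm{un}}(\Psi)^{1/q}\le\delta$.

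Second, the regime $\delta>1$ cannot be absorbed by enlarging $C_p$, as your Step~1 suggests. Take $\Phi=\mathrm{id}$ and $\Psi=\lambda\,\mathrm{id}$ with $\lambda>1$, so that $R\ge0$. Then $\delta=2(\lambda-1)$ and $\|\Phi-\Psi\|_{p\to p}=\lambda-1=\delta/2$. Since $\alpha\le1/2$, this exceeds $C_p\delta^{\alpha}$ for large $\lambda$. So the claim fails for large defects even under your structural hypothesis. The correct repaired statement is the linear bound above, which yields $\delta^\alpha$ only when $\delta\le1$. Your last paragraph says this, but Step~1 does not.
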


\begin{proof}
Let $1<p<\infty$ and $1/p+1/q=1$.  Define the error map
\[
E = \Psi - \Phi.
\]
Since $\Psi$, $\Phi$ are bounded on $L_p(\M)$ and $\Phi$ is QDS, $E$ is a bounded linear map on each $L_r(\M)$ for $r=1,p,\infty$.

For any $x\in L_1(\M)$ with $\norm{x}_1=1$, positivity of $\Psi$ gives
\[
\norm{E(x)}_1
= \norm{\Psi(x)-\Phi(x)}_1
\ge \abs{\tau\del{\Psi(x)-\Phi(x)}}
= \abs{\tau\del{\Psi(x)-x}}
\le \delta_{\mathrm{tr}}(\Psi).
\]
Hence
\[
\norm{e}_{1\to1} \le \delta_{\mathrm{tr}}(\Psi).
\]
For any $x\in L_\infty(\M)$ with $\norm{x}_\infty=1$, using operator norm and that $\Phi(1)=1$,
\[
\norm{E(x)}_\infty
= \norm{\Psi(x)-\Phi(x)}_\infty
\le \norm{\Psi(x)-x}_\infty + \norm{x-\Phi(x)}_\infty
= \norm{\Psi(x)-x}_\infty,
\] 
and since $\norm{\Psi(x)-x}_\infty\le\norm{\Psi(1)-1}_\infty$ for positive $x$, we get
\[
\norm{e}_{\infty\to\infty} \le \delta_{\mathrm{un}}(\Psi).
\]
By the Riesz–Thorin interpolation theorem, for any linear operator $T$ bounded on $L_1(\M)$ and $L_\infty(\M)$,
\[
\norm{T}_{p\to p} \le \norm{T}_{1\to1}^{1-\theta}\,\norm{T}_{\infty\to\infty}^{\theta},
\quad\frac1p = 1-\theta + 0\cdot\theta,
\quad\theta = 1-\frac1p = \frac1q.
\]
Applying this to $E$ yields
\[
\norm{e}_{p\to p}
\le \norm{e}_{1\to1}^{1-\theta}\,\norm{e}_{\infty\to\infty}^{\theta}
= \delta_{\mathrm{tr}}(\Psi)^{1/p}\,\delta_{\mathrm{un}}(\Psi)^{1/q}.
\]
Symmetrically, interpolating with exponents swapped (i.e.\ viewing $L_p$ interpolation between $L_\infty$ and $L_1$) one also obtains
\[
\norm{e}_{p\to p} \le \delta_{\mathrm{tr}}(\Psi)^{1/q}\,\delta_{\mathrm{un}}(\Psi)^{1/p}.
\]
Combining these two bounds gives
\[
\norm{e}_{p\to p} \le \del{\delta_{\mathrm{tr}}(\Psi)+\delta_{\mathrm{un}}(\Psi)}^{\min\{1/p,1/q\}},
\]
up to a constant factor $C_p$ depending only on $p$ that arises when replacing the geometric mean by a power of the sum (via elementary inequalities).

\medskip

oindent
Since $\Phi$ is an isometry on $L_p(\M)$ (by Theorem~\ref{thm:norm_bound}), we have
\[
\norm{\Phi-\Psi}_{p\to p} = \norm{e}_{p\to p},
\]
and the desired Lipschitz‐type estimate follows.
\end{proof}
\begin{corollary}
Under the assumptions of Theorem~\ref{thm:perturb}, if $\delta_{\mathrm{tr}}(\Psi)+\delta_{\mathrm{un}}(\Psi)<\varepsilon$, then
\[
1- C_p\varepsilon^{\alpha} \le\norm{\Psi}_{p\to p}\le 1+ C_p\varepsilon^{\alpha}.
\]
\end{corollary}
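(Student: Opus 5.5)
The plan is to combine Theorem~\ref{thm:perturb} with the exact norm identity of Theorem~\ref{thm:norm_bound} through the reverse triangle inequality in the Banach space of bounded operators on $L_p(\M)$. No new interpolation argument should be needed; the corollary is a direct consequence of these two results.

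\emph{Step 1.} Fix $1<p<\infty$, a QDS map $\Phi$ and a bounded positive map $\Psi$ on $L_p(\M)$, as in the hypotheses of Theorem~\ref{thm:perturb}. Theorem~\ref{thm:perturb} gives
\[
\norm{\Phi-\Psi}_{p\to p}\le C_p\del{\delta_{\mathrm{tr}}(\Psi)+\delta_{\mathrm{un}}(\Psi)}^{\alpha}.
\]
Here $\alpha=\min\{1/p,1/q\}>0$, so $t\mapsto t^{\alpha}$ is increasing on $[0,\infty)$. The hypothesis $\delta_{\mathrm{tr}}(\Psi)+\delta_{\mathrm{un}}(\Psi)<\varepsilon$ therefore yields $\norm{\Phi-\Psi}_{p\to p}\le C_p\varepsilon^{\alpha}$. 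The constant $C_p$ is the same one as in Theorem~\ref{thm:perturb}, since it depends only on $p$.

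\emph{Step 2.} The operator norm on $L_p(\M)$ satisfies the reverse triangle inequality
\[
\abs{\norm{\Psi}_{p\to p}-\norm{\Phi}_{p\to p}}\le\norm{\Psi-\Phi}_{p\to p}.
\]
By Theorem~\ref{thm:norm_bound}, $\norm{\Phi}_{p\to p}=1$ because $\Phi$ is QDS and $1<p<\infty$. Substituting this and the bound from Step 1 gives $\abs{\norm{\Psi}_{p\to p}-1}\le C_p\varepsilon^{\alpha}$. Unwinding the absolute value produces both inequalities
\[
1-C_p\varepsilon^{\alpha}\le\norm{\Psi}_{p\to p}\le 1+C_p\varepsilon^{\alpha}.
\]

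\emph{Main obstacle.} There is essentially no analytic difficulty here. The only point requiring care is that the estimate is used for the fixed QDS comparison map $\Phi$ from the hypotheses of Theorem~\ref{thm:perturb}, with the same $p$ and the same constant $C_p$. I would also remark that the lower bound is only informative when $C_p\varepsilon^{\alpha}<1$, although the inequality as stated holds for every $\varepsilon>0$.
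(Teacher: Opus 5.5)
Your argument is the paper's proof: you combine $\norm{\Phi-\Psi}_{p\to p}\le C_p\varepsilon^{\alpha}$ from Theorem~\ref{thm:perturb} with $\norm{\Phi}_{p\to p}=1$ from Theorem~\ref{thm:norm_bound} via the triangle and reverse triangle inequalities, and you spell out the monotonicity of $t\mapsto t^{\alpha}$ that the paper uses silently. One caveat applies equally to the paper's proof: the deduction is only as good as Theorem~\ref{thm:perturb}, which cannot hold as stated, since for $\Phi=\mathrm{id}$ and $\Psi=\Phi_0$ on $M_2(\mathbb C)$ both $\delta_{\mathrm{tr}}(\Psi)$ and $\delta_{\mathrm{un}}(\Psi)$ vanish, yet $\norm{\Phi-\Psi}_{p\to p}\ge 1$.
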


\begin{proof}
We assume the hypotheses of Theorem~\ref{thm:perturb}, i.e., that $\Phi$ is QDS and $\Psi$ is a bounded positive map on $L_p(\M)$ such that
\[
\delta_{\mathrm{tr}}(\Psi) + \delta_{\mathrm{un}}(\Psi) < \varepsilon.
\]
Let $\alpha = \min\{1/p,1/q\}$ and let $C_p>0$ be the constant from Theorem~\ref{thm:perturb}.

By the triangle inequality and Theorem~\ref{thm:perturb},
\[
\norm{\Psi}_{p\to p} 
= \norm{\Phi + (\Psi - \Phi)}_{p\to p}
\le \norm{\Phi}_{p\to p} + \norm{\Psi - \Phi}|_{p\to p}
= 1 + C_p\varepsilon^{\alpha}.
\]

Again using the triangle inequality,
\[
\abs{\norm{\Psi}_{p\to p} - \norm{\Phi}_{p\to p}} 
\le \norm{\Psi - \Phi}|_{p\to p}
\le C_p\varepsilon^{\alpha}.
\]
Since $\norm{\Phi}_{p\to p} = 1$, this yields
\[
\norm{\Psi}_{p\to p} \ge 1 - C_p\varepsilon^{\alpha}.
\]

Combining the two bounds gives the desired inequality:
\[
1 - C_p\varepsilon^{\alpha} \le\norm{\Psi}_{p\to p} \le 1 + C_p\varepsilon^{\alpha}.
\]
\end{proof}

Let $\Phi$ be a QDS map and define $\Psi(x)=\Phi(x)+\varepsilon\,a\tau(x)$ for some positive $a\in\M$ with $\norm{a}_\infty=1$ and small $\varepsilon>0$.  Then $\delta_{\mathrm{tr}}(\Psi)=0$ and $\delta_{\mathrm{un}}(\Psi)=\varepsilon$.  By Theorem~\ref{thm:perturb}, $\norm{\Phi-\Psi}_{p\to p}=O(\varepsilon^{\alpha})$. \medskip

Consider $\Psi_t(x)=t\Phi(x)+(1-t)\mathrm{Ad}_u(x)$, where $\mathrm{Ad}_u(x)=u^*xu$ for a unitary $u$ and $0<1-t\ll1$.  Then $\delta_{\mathrm{tr}}(\Psi_t)=O(1-t)$, $\delta_{\mathrm{un}}(\Psi_t)=0$, and Theorem~\ref{thm:perturb} yields $\norm{\Psi_t-\Phi}_{p\to p}=O((1-t)^{\alpha})$ illustrating stability under mild noise.

\section{Applications to Quantum Majorization}
\label{sec:majorization}

In this section, we apply our developed theory of quantum doubly stochastic (QDS) maps to the study of non-commutative majorization and entropic inequalities.

Let $\rho,\sigma\in L_1(\M)$ be density operators ($(\rho,\sigma\ge0$, $\tau(\rho)=\tau(\sigma)=1)$).  We say $\rho$ is \emph{majorized} by $\sigma$ (denoted $\rho\prec\sigma$) if there exists a QDS map $\Phi$ such that
\[
\rho=\Phi(\sigma).
\]
See {\cite{Uhlmann1976,Ando1989}}.

\begin{theorem}
For density operators $\rho,\sigma$ in a finite von Neumann algebra $\M$, the following are equivalent:
\begin{enumerate}
  \item $\rho\prec\sigma$.
  \item $\norm{f(\rho)}_1\le\norm{f(\sigma)}_1$ for every convex function $f:[0,\infty)\to[0,\infty)$ with $f(0)=0$.
  \item $\norm{\rho}_p\le\norm{\sigma}_p$ for all $p\ge1$.
\end{enumerate}
\end{theorem}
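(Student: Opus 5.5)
The plan is to prove the cycle $(1)\Rightarrow(2)\Rightarrow(3)$ directly and then return from the norm conditions to the existence of a QDS map through the classical Hardy--Littlewood--Pólya characterization, phrased in terms of the generalized singular value function (spectral scale) $\mu_t(x)$ of a $\tau$-measurable operator. Throughout I will use that $\M$ is finite, so after normalization $\tau(1)=1$, $1\in L_1(\M)$, and both trace preservation and unitality make sense at the same time. For a positive $x$ and $f\ge0$ with $f(0)=0$ we have $\norm{f(x)}_1=\tau(f(x))=\int_0^1 f(\mu_t(x))\,dt$, and I will use this formula repeatedly.

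\emph{$(1)\Rightarrow(2)$.} Suppose $\rho=\Phi(\sigma)$ with $\Phi$ QDS. The key tool is the trace Jensen inequality for unital positive maps, namely
\[
\tau\bigl(f(\Phi(\sigma))\bigr)\le\tau\bigl(\Phi(f(\sigma))\bigr)
\]
for convex $f$. I would prove this by first treating piecewise-linear $f$ and then passing to a limit. For piecewise-linear $f$ one writes $f(t)=\sup_k(a_kt+b_k)$ and uses the spectral projections of $\Phi(\sigma)$ to select the active affine piece. Unitality handles the constant terms, and positivity gives the inequality. Applying trace preservation to the right-hand side then gives $\tau(f(\rho))\le\tau(f(\sigma))$. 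To pass to unbounded $\sigma\in L_1$, I would truncate $\sigma$, apply the bounded case, and use normality of $\tau$ together with the $L_1$-continuity of $\Phi$ from Theorem~\ref{thm:norm_bound}.

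\emph{$(2)\Rightarrow(3)$.} This step is immediate: take $f(t)=t^p$, which is convex, nonnegative and vanishes at $0$, and then take $p$-th roots.

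\emph{The return implication.} This is the main obstacle. The family $\{t^p\}_{p\ge1}$ alone does not span the convex cone generated by the functions $(t-s)_+$. For this reason I would not try to go from (3) to (1) in one step. Instead I would first show that (3), read together with the normalization $\tau(\rho)=\tau(\sigma)=1$, recovers the inequalities $\tau((\rho-s)_+)\le\tau((\sigma-s)_+)$ for all $s\ge0$. My first attempt would use a Laplace/Mellin-type representation of $(t-s)_+$ as a limit of positive combinations of the functions $t^p$ and $t$. If this fails, the honest route is to prove $(2)\Rightarrow(1)$ and to treat (3) as equivalent only after it is strengthened in this way.

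Given the family of inequalities $\tau((\rho-s)_+)\le\tau((\sigma-s)_+)$, the standard argument gives
\[
\int_0^s\mu_t(\rho)\,dt\le\int_0^s\mu_t(\sigma)\,dt
\]
for all $s$, with equality at $s=1$. This is exactly submajorization of spectral scales together with equal traces. The QDS map is then built in two stages. First, choose maximal abelian subalgebras $\mathcal A_\rho\ni\rho$ and $\mathcal A_\sigma\ni\sigma$, each isomorphic to $L_\infty[0,1]$ when $\M$ is diffuse, or atomic pieces otherwise. Second, realize $\mu(\rho)\prec\mu(\sigma)$ by a classical doubly stochastic operator on $L_1[0,1]$, using Ryff's theorem. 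The desired $\Phi$ is the composite
\[
x\;\longmapsto\;E_{\mathcal A_\sigma}(x)\;\longmapsto\;\text{rearrangement}\;\longmapsto\;\text{classical doubly stochastic map}\;\longmapsto\;\text{embedding into }\mathcal A_\rho.
\]
Each factor is positive, unital and trace-preserving, so by Theorem~\ref{thm:interp_stability} the composite is QDS on every $L_p(\M)$. The delicate points are the trace-preserving identifications between the abelian subalgebras and $[0,1]$ when $\M$ has atoms, and the control of the equality case at $s=1$.
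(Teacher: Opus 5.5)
Your hesitation at the return implication is justified, and it is decisive. The Laplace/Mellin attempt cannot succeed, because (3) does not imply (2), and hence does not imply (1). Work in $M_3(\mathbb C)$ with the usual trace and take $\rho=\mathrm{diag}(\tfrac12,\tfrac12,0)$ and $\sigma=\mathrm{diag}(\tfrac45,\tfrac1{10},\tfrac1{10})$.

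\emph{(3) holds.} We have $\mathrm{Tr}(\sigma^p)/\mathrm{Tr}(\rho^p)=h(p):=\tfrac12\bigl((8/5)^p+2(1/5)^p\bigr)$. This function is convex in $p$, with $h(1)=1$ and $h'(1)=\tfrac15\log(4096/3125)>0$. Hence $h(p)\ge1$, i.e.\ $\norm{\rho}_p\le\norm{\sigma}_p$, for every $p\ge1$.

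\emph{(2) fails.} Take $f(t)=(t-\tfrac1{10})_+$. Then $\mathrm{Tr}f(\rho)=\tfrac45>\tfrac7{10}=\mathrm{Tr}f(\sigma)$. Equivalently, $\lambda_1+\lambda_2$ is $1$ for $\rho$ but only $\tfrac9{10}$ for $\sigma$. Since $(1)\Rightarrow(2)$ is valid, also $\rho\not\prec\sigma$.

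So no representation of $(t-s)_+$ by positive combinations of the $t^p$ plus multiples of $t$ can exist. The statement as written is false; what holds is $(1)\Leftrightarrow(2)\Rightarrow(3)$. You should say so outright and present your fallback as the theorem, not as a contingency.

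The paper's own proof has the same hole. Its $(3)\Rightarrow(1)$ restates the Hardy--Littlewood--P\'olya equivalence between partial-sum inequalities and majorization. It never derives those partial-sum inequalities from the $p$-norm hypothesis, which is exactly what the example shows cannot be done. Used correctly, that theorem gives $(2)\Rightarrow(1)$ in $M_n$.

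On the parts that are true, your route differs from the paper's and is sounder.

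\emph{$(1)\Rightarrow(2)$.} The paper applies the operator Jensen inequality after asserting that every convex $f$ is operator convex, which is false (e.g.\ $t^3$). Your trace Jensen inequality, proved via piecewise-linear $f$ and the spectral projections of $\Phi(\sigma)$, needs only positivity and unitality, and is the right argument.

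\emph{The converse.} The paper works only with finite eigenvalue lists, and it ends with ``unitarily conjugate, hence equal'', which needs one more unitary conjugation. Your $(2)\Rightarrow(1)$ goes through $\int_0^s\mu_t(x)\,dt=\inf_{r\ge0}\{rs+\tau((x-r)_+)\}$ and Ryff's theorem. This is essentially Hiai's theorem, and it covers arbitrary finite $\M$.

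\emph{Removing your ``delicate points''.} Replace the masas by $W^*(\sigma)$ and $W^*(\rho)$.
Because $\mu(\sigma)$ is equimeasurable with $\sigma$ on $[0,\tau(1)]$, the map $g(\sigma)\mapsto g\circ\mu(\sigma)$ is a trace-preserving unital $*$-embedding of $W^*(\sigma)$ into $L_\infty[0,\tau(1)]$. This holds with no assumption on atoms or separability.
On the $\rho$ side, use the $\tau$-adjoint of $g(\rho)\mapsto g\circ\mu(\rho)$, namely conditional expectation onto functions of $\mu(\rho)$ followed by $k(\mu(\rho))\mapsto k(\rho)$. This sends $\mu(\rho)$ to $\rho$.
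Precompose with the trace-preserving conditional expectation onto $W^*(\sigma)$. The composite is automatically completely positive, since each factor has commutative domain or range. It is also visibly contractive on $L_1$ and $L_\infty$, so you do not need Theorem~\ref{thm:interp_stability}.

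Finally, either keep track of $\tau(1)$ or observe that all three conditions are homogeneous under rescaling $\tau$. Normalizing $\tau(1)=1$ otherwise conflicts with $\tau(\rho)=\tau(\sigma)=1$.
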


\begin{proof}
Let $\M$ be a finite von Neumann algebra. \\
\textbf{(1)\,$\Rightarrow$\,(2).}
Assume $\rho\prec\sigma$, so there exists a quantum doubly stochastic (QDS) map $\Phi$ with
\[
\rho \;=\;\Phi(\sigma).
\]
Let $f:[0,\infty)\to[0,\infty)$ be convex with $f(0)=0$.  Since $f$ is operator convex on $[0,\infty)$ and $\Phi$ is unital and trace‐preserving, the non‐commutative Jensen inequality (see, e.g., \cite{PetZ2010}) gives
\[
f\del{\Phi(\sigma)}
\;\le\;
\Phi\del{f(\sigma)},
\]
and positivity of $\Phi$ then yields
\[
\norm{f(\rho)}_1
=\tau\del{f(\rho)}
=\tau\del{f(\Phi(\sigma))}
\;\le\;
\tau\del{\Phi(f(\sigma))}
=\tau\del{f(\sigma)}
=\norm{f(\sigma)}_1.
\]

\medskip\noindent
\textbf{(2)\,$\Rightarrow$\,(3).}
This is immediate by choosing $f(t)=t^p$, which is convex on $[0,\infty)$ with $f(0)=0$.  Then
\[
\norm{\rho}_p^p
=\tau\del{\rho^p}
=\norm{f(\rho)}_1
\;\le\;
\norm{f(\sigma)}_1
=\tau\del{\sigma^p}
=\norm{\sigma}_p^p,
\]
so $\norm{\rho}_p\le\norm{\sigma}_p$ for every $p\ge1$.

\medskip\noindent
\textbf{(3)\,$\Rightarrow$\,(1).}
Suppose $\norm{\rho}_p\le\norm{\sigma}_p$ for all $p\ge1$.  Write the eigenvalue vectors of $\rho,\sigma$ in non‐increasing order:
\[
\lambda(\rho) = (\lambda_1(\rho)\ge \cdots \ge \lambda_n(\rho)),
\quad
\lambda(\sigma) = (\lambda_1(\sigma)\ge \cdots \ge \lambda_n(\sigma)).
\]
A classical theorem of Hardy–Littlewood–Pólya (see \cite{Bhatia1997}) says that
\[
\sum_{i=1}^k \lambda_i(\rho)
\;\le\;
\sum_{i=1}^k \lambda_i(\sigma)
\quad\text{for }k=1,\dots,n-1,
\quad
\sum_{i=1}^n \lambda_i(\rho)
=\sum_{i=1}^n \lambda_i(\sigma)
=1
\]
if and only if $\lambda(\rho)$ is majorized by $\lambda(\sigma)$ in the classical sense.  One then constructs a doubly stochastic matrix $D=(d_{ij})$ such that
\[
\lambda_i(\rho) = \sum_{j=1}^n d_{ij}\,\lambda_j(\sigma).
\]
By the Schur–Horn theorem (or the Birkhoff–von Neumann theorem), there exist unitaries $U_1,\dots,U_m$ and weights $w_1,\dots,w_m\ge0$, $\sum w_k=1$, so that
\[
D \;=\; \sum_{k=1}^m w_k P_{U_k}, 
\quad
P_{U_k}(v) = U_k v\,,
\]
and hence the mixed‐unitary channel
\[
\Phi(X) \;=\; \sum_{k=1}^m w_k\,U_k^*\,X\,U_k
\]
is QDS and satisfies
\[
\Phi(\sigma)
\;=\;
\sum_{k=1}^m w_k\,U_k^*\,\sigma\,U_k
\;\text{ has eigenvalues }\;
D\,\lambda(\sigma)
\;=\;
\lambda(\rho).
\]
Hence $\Phi(\sigma)$ is unitarily conjugate to $\rho$, so
\(\Phi(\sigma)=\rho\).  That is exactly $\rho\prec\sigma$.

\medskip\noindent
Thus all three conditions are equivalent.
\end{proof}

The von Neumann entropy of a density operator $\rho$ is
\[
S(\rho)=-\tau(\rho\log\rho).
\]

\begin{theorem}[See {\cite{Petz1986}}]
Let $\Phi$ be a QDS map on $L_1(\M)$.  Then for any density operator $\rho$,
\[
S\del{\Phi(\rho)} \ge S(\rho).
\]
\end{theorem}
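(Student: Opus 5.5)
The plan is to deduce the entropy inequality from the data processing inequality for Umegaki relative entropy, taking the reference state to be the (unnormalized) unit $1$, which is fixed by $\Phi$ because $\Phi$ is unital.

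Set $D(\rho\|\sigma)=\tau(\rho\log\rho-\rho\log\sigma)$ for positive $\rho,\sigma\in L_1(\M)$. For $\sigma=1$ this gives $D(\rho\|1)=\tau(\rho\log\rho)=-S(\rho)$. The key steps, in order, are:

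\begin{enumerate}
  \item Record that $\Phi$ is positive and trace-preserving on $L_1(\M)$. By Theorem~\ref{thm:norm_bound} it is a contraction, and its adjoint $\Phi^*$ is a unital positive normal map on $\M$.
  \item Invoke Petz's monotonicity theorem: $D(\Phi(\rho)\|\Phi(\sigma))\le D(\rho\|\sigma)$ for trace-preserving maps. This requires complete positivity, or at least $2$-positivity / Schwarz-type positivity of $\Phi^*$. We assume this, in line with the paper's standing convention that QDS maps are typically completely positive.
  \item Apply it with $\sigma=1$ and use $\Phi(1)=1$. This gives $-S(\Phi(\rho))\le -S(\rho)$, which is the claim.
\end{enumerate}

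The main obstacle is that $1\notin L_1(\M)$ when $\tau(1)=\infty$, so step 3 is not directly legitimate in the semifinite case. The remedy I would try first is a direct Jensen argument avoiding $\sigma=1$. The function $\eta(t)=t\log t$ is operator convex on $[0,\infty)$. The Choi--Davis/Jensen inequality for the unital positive map $\Phi^*$ gives $\eta(\Phi^*(a))\le\Phi^*(\eta(a))$. To use this, I would rewrite $\tau(\Phi(\rho)\log\Phi(\rho))$ via duality as $\tau(\rho\,\Phi^*(\log\Phi(\rho)))$. Then I would apply the concave counterpart, operator concavity of $\log$, to get $\Phi^*(\log\Phi(\rho))\le\log\Phi^*(\Phi(\rho))$.

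This needs care, since $\log$ is unbounded, so I would truncate. Concretely, replace $\log$ by $\log(\cdot+\varepsilon)$ and restrict $\rho$ to a $\tau$-finite projection $e$, so that all quantities lie in $L_1\cap\M$. Then let $\varepsilon\to0$ and $e\uparrow1$, using normality of $\tau$ and monotone convergence.

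If that route is cumbersome, I would instead reduce to the finite case. Compress $\rho$ to a finite corner, use $1_e$ as the reference state there, and apply Petz monotonicity. Lower semicontinuity of relative entropy would then pass the inequality to the limit.

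Should a gap remain, the finite von Neumann algebra case, where $1\in L_1$ after normalizing $\tau$, would be stated as proved. In that case $D(\rho\|\tau(1)^{-1}1)=\log\tau(1)-S(\rho)$, and steps 1--3 go through verbatim.
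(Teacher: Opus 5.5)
This is essentially the paper's route: the paper gives no separate proof, only an appeal to Petz's monotonicity of relative entropy (repeated in the proof of Proposition~\ref{prop:strict_entropy}), which is exactly your steps 1--3 with reference $\sigma=1$, and it does not address the $\tau(1)=\infty$ issue you raise. As a side remark, neither complete positivity nor the reference state is needed: the Davis--Choi Jensen inequality for the unital positive map $\Phi$ itself gives $\Phi(\rho)\log\Phi(\rho)\le\Phi(\rho\log\rho)$, and applying $\tau$ with trace preservation finishes (the same step the paper uses in (1)$\Rightarrow$(2) of its majorization theorem), whereas your detour through $\Phi^*$ and operator concavity of $\log$ only reaches $\tau(\rho\log\Phi^*\Phi(\rho))$ and still needs Klein's inequality, using $\tau(\Phi^*\Phi(\rho))=1$, to close.
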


\begin{proposition}
\label{prop:strict_entropy}
If $\Phi$ is a non-unitary QDS map (i.e., not a $*$-automorphism) on a factor $\M$, then
\[
S\del{\Phi(\rho)} > S(\rho)
\]
for all non-pure $\rho$.
\end{proposition}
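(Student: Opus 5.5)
The natural plan is to reduce the strict inequality to the equality case in the monotonicity theorem just quoted from \cite{Petz1986}. Since $\Phi$ is unital and trace‑preserving, for a density operator $\rho$ we can write
\[
S\del{\Phi(\rho)}-S(\rho)=D(\rho\,\|\,1)-D\del{\Phi(\rho)\,\|\,\Phi(1)},
\]
where $D(a\|b)=\tau(a\log a-a\log b)$ is the (tracial) relative entropy. The monotonicity statement is the non‑negativity of this difference. For the strict statement I would invoke Petz's characterization of equality in the data‑processing inequality. Because the reference state is the trace, equality holds if and only if the Petz recovery map returns $\rho$. Here that recovery map is simply the adjoint $\Phi^*$, so the equality condition reads
\[
\Phi^*\Phi(\rho)=\rho .
\]
Next I would analyse the fixed points of the unital, trace‑preserving, completely positive map $\Phi^*\Phi$. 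By the standard multiplicative‑domain/Kadison–Schwarz argument, its fixed points form a von Neumann subalgebra $\mathcal N\subset\M$. On $\mathcal N$, $\Phi$ acts as a trace‑preserving $*$‑homomorphism, and $\Phi^*\Phi$ restricts to the identity there. So equality $S(\Phi(\rho))=S(\rho)$ occurs exactly when $\rho\in L_1(\mathcal N)$. The remaining step would be to show that, when $\M$ is a factor and $\Phi$ is not a $*$‑automorphism, $\mathcal N$ contains no non‑pure density operators. The hope is that a nonscalar $\mathcal N$ would force $\Phi$ to be multiplicative on all of $\M$.

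\textbf{This last step is the main obstacle, and it fails as stated.} Take $\M=M_n(\mathbb C)$, a factor, and let $\Phi$ be the pinching onto the diagonal, $\Phi(x)=\sum_i e_{ii}xe_{ii}$. This map is completely positive, unital and trace‑preserving, hence QDS, and it is not a $*$‑automorphism. Its fixed‑point algebra $\mathcal N$ is the diagonal algebra. For any non‑pure diagonal density $\rho$, such as $\rho=\frac1n 1$, we get $\Phi(\rho)=\rho$ and therefore $S(\Phi(\rho))=S(\rho)$. The same obstruction arises for any map of the form $x\mapsto\frac1n\mathrm{Tr}(x)1$ evaluated at the maximally mixed state. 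So, as stated, the proposition cannot be proved for all non‑pure $\rho$.

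What the argument above does establish is a corrected version. Strict inequality holds for every density $\rho\notin L_1(\mathcal N)$, where $\mathcal N=\{x:\Phi^*\Phi(x)=x\}$. In particular, $S(\Phi(\rho))>S(\rho)$ for all $\rho\neq\frac{1}{\tau(1)}1$ whenever $\Phi^*\Phi$ is ergodic, i.e. $\mathcal N=\mathbb C1$. I would therefore prove the proposition under this ergodicity hypothesis, replacing the "non‑unitary" assumption. The steps are: apply Petz's equality criterion, then the Kadison–Schwarz fixed‑point argument, then the observation that $L_1(\mathbb C1)$ contains only the tracial state. The pinching example would be recorded as showing that the hypothesis cannot be dropped.
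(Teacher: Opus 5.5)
Your objection is correct. The proposition is false as stated, and the pinching $\Phi(x)=\sum_i e_{ii}xe_{ii}$ on $M_n(\mathbb C)$, $n\ge 2$, is a valid counterexample. It is completely positive, unital, trace-preserving and not injective, yet it fixes every diagonal density, e.g.\ $\mathrm{diag}(\tfrac12,\tfrac12,0,\dots,0)$. The failure is in fact universal. Every QDS map is unital, so $\Phi(\tfrac1n1)=\tfrac1n1$, and the strict inequality fails at the maximally mixed state for \emph{every} QDS map on $M_n(\mathbb C)$, $n\ge2$; likewise at $\tau(1)^{-1}1$ on any finite factor other than $\mathbb C$. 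Under the paper's literal definition, which only requires positivity, it fails even more badly. The transpose is positive, unital and trace-preserving and is not a $*$-automorphism, yet it preserves the spectrum, hence the entropy, of every $\rho$.

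The paper's proof starts where yours does. Equality in Petz's monotonicity makes $\Phi$ sufficient for $(\rho,1)$, hence multiplicative on the algebra generated by $\rho$. It then asserts that, because $\M$ is a factor, $\Phi$ must be a $*$-automorphism of all of $\M$. This is exactly the step you identified as the obstacle, and nothing supports it. Multiplicativity on the abelian algebra $W^*(\rho)$ says nothing about the rest of $\M$, and the pinching is the identity on the whole diagonal masa. The paper's alternative Jensen argument fails too. Its claim that non-unitarity forces some $\Phi(p_i)$ to have rank $>1$ is false for the pinching with $p_i=e_{ii}$. Even granting that claim, the concavity lower bound $\sum_i\lambda_i S(\Phi(p_i))$ is unrelated to $S(\rho)=-\sum_i\lambda_i\log\lambda_i$, so the final ``combining'' step is a non sequitur.

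Your repaired statement is sound, but you should make two hypotheses explicit.

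First, Petz's equality criterion in its standard form requires $\Phi$ to be $2$-positive, e.g.\ completely positive. So does the Kadison--Schwarz identification of $\mathcal N=\{x:\Phi^*\Phi(x)=x\}$ with the multiplicative domain, which runs $\Phi^*\Phi(x)=x\iff\norm{\Phi(x)}_2=\norm{x}_2\iff\Phi(x^*x)=\Phi(x)^*\Phi(x)$ and uses faithfulness of $\tau$. For merely positive QDS maps, which the paper's definition admits, these structural claims break. For $x\mapsto\tfrac12(x+x^{T})$ the fixed points of $\Phi^*\Phi=\Phi$ are the symmetric matrices, which do not form an algebra. The transpose is not multiplicative on its $\mathcal N=M_n(\mathbb C)$.

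Second, beyond finite dimensions Petz's theorem needs finite relative entropy, so restrict to densities with $|S(\rho)|<\infty$. On $\B(\ell^2)$ a density with $S(\rho)=+\infty$ also has $S(\Phi(\rho))=+\infty$ for every QDS $\Phi$, so strictness fails there. Note also that $\tau(1)^{-1}1$ only makes sense when $\tau(1)<\infty$. When $\tau(1)=\infty$, ergodicity gives strict increase for every finite-entropy density, since $L_1(\mathbb C1)=\{0\}$.
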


\begin{proof}
By the standard operator convexity argument (Petz's monotonicity theorem \cite{Petz1986}), for any trace-preserving positive map $\Phi$,
\[
S\del{\Phi(\rho)} \ge S(\rho).
\]
Moreover, equality holds if and only if there exists a recovery map (another completely positive, trace-preserving map) $\Psi$ satisfying $\Psi\circ\Phi(\rho)=\rho$ and $\Psi\circ\Phi(\rho\log\rho)=\rho\log\rho$ (Petz recovery; see \cite{OhyaPetz1993}).  In particular, equality implies that $\Phi$ is sufficient for the pair $(\rho,1)$ and thus restricts to a $*$-isomorphism on the von Neumann subalgebra generated by $\rho$.  Since $\M$ is a factor, this forces $\Phi$ to be a $*$-automorphism on all of $\M$.

Alternatively, one can argue directly using Jensen's inequality with strict convexity.  Write the spectral decomposition
\[
\rho = \sum_{i=1}^n \lambda_i \,p_i,
\quad\lambda_i>0,\;\sum_i\lambda_i=1,
\]
where $\{p_i\}$ are mutually orthogonal minimal projections.  Since $\Phi$ is positive and unital,
\[
\Phi(\rho) = \sum_{i=1}^n \lambda_i\,\Phi(p_i)
\]
is a convex combination of the positive operators $\Phi(p_i)$ with the same weights $\lambda_i$.  Then
\[
S\del{\Phi(\rho)} = -\tau\del{\Phi(\rho)\log\Phi(\rho)}
= -\tau\!\del{\sum_i\lambda_i\Phi(p_i)\log\Phi(\rho)}.
\]
By the operator Jensen inequality (since $f(t)=-t\log t$ is operator concave on $[0,1]$) and the strict concavity on the positive cone, we get
\[
S\del{\Phi(\rho)} > \sum_{i=1}^n \lambda_i\,S\del{\Phi(p_i)}.
\]
But each $p_i$ is a minimal projection, so $\Phi(p_i)$ is a density operator supported on a subspace of dimension $\ge1$.  Since $\Phi$ is non-unitary, at least one $\Phi(p_i)$ has rank $>1$, hence $S(\Phi(p_i))>0$.  Meanwhile, $S(\rho)=\sum_i\lambda_i\cdot0=0$ for pure spectral projections, and more generally
\[
S(\rho)= -\sum_i\lambda_i\log\lambda_i.
\]
Combining yields
\[
S\del{\Phi(\rho)} > -\sum_i \lambda_i\log\lambda_i = S(\rho).
\]

\medskip

oindent
Thus for any non-pure $\rho$, the entropy increases strictly under a non-unitary QDS map.  This completes the proof.
\end{proof}

The existence and structure of QDS maps on II$_1$ factors relate to Connes' embedding conjecture: if every finite set of operators in a II$_1$ factor can be approximated in moments by matrices, then majorization relations in the factor lift to the matricial level.  Voiculescu's free entropy dimension \cite{Voiculescu1998} provides an invariant distinguishing factors admitting rich families of QDS maps. \medskip

Let $\rho,\sigma\in M_n(\mathbb C)$ be two density matrices.  Then $\rho\prec\sigma$ if and only if the vector of eigenvalues $\lambda(\rho)$ is majorized by $\lambda(\sigma)$ in the classical sense, realized by the Schur‐Horn theorem and mixed-unitary channels \cite{HornJohnson1991}. \medskip

On $L_1([0,1])$, classical majorization via doubly stochastic kernels extends to our setting: if $\rho(x),\sigma(x)$ are probability densities on $[0,1]$, then $\rho\prec\sigma$ iff $\rho=\int_0^1K(x,y)\sigma(y)dy$ for a classical doubly stochastic kernel $K$, viewed as a special QDS map on the abelian von Neumann algebra $L_\infty([0,1])$.

\begin{corollary}
Under majorization, the entropy difference satisfies
\[
0\le S\del{\Phi(\rho)}-S(\rho) \le \log(d),
\]
where $d=\mathrm{dim}(\M)$ for finite-dimensional $\M$, achieving the upper bound for the maximally depolarizing channel.
\end{corollary}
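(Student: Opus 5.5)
The lower bound is the earlier entropy theorem (cited from \cite{Petz1986}): since $\rho\prec\Phi$-image means $\Phi(\rho)$ is the output of a QDS map, we have $S(\Phi(\rho))\ge S(\rho)$, i.e.\ $S(\Phi(\rho))-S(\rho)\ge 0$. So all the work is in the upper bound. I will prove it by splitting it into two separate estimates:
\[
S(\Phi(\rho))\le \log d \qquad\text{and}\qquad S(\rho)\ge 0 .
\]
These combine trivially.

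For finite-dimensional $\M$, I will write $\M\cong\bigoplus_k M_{n_k}(\mathbb C)$ with $\tau$ the canonical trace, so that every minimal projection has trace $1$. For a density $\sigma$, its eigenvalues $\mu_1,\dots,\mu_m$ (counted with multiplicity) satisfy $\mu_j\ge 0$, $\sum_j\mu_j=1$, and $m=\tau(1)=\sum_k n_k\le\dim\M=d$.

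\emph{Nonnegativity.} Each $\mu_j\le 1$, so $-\mu_j\log\mu_j\ge 0$, which gives $S(\sigma)\ge0$. Applied to $\sigma=\rho$, this is the second estimate.

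\emph{Upper bound.} Concavity of $\log$ (Jensen with weights $\mu_j$) gives
\[
S(\sigma)=\sum_j \mu_j\log(1/\mu_j)\le \log\Bigl(\sum_j \mu_j\cdot\tfrac1{\mu_j}\Bigr)\le\log m\le\log d .
\]
Applied to $\sigma=\Phi(\rho)$, this is the first estimate, and the two-sided inequality follows.

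For the attainment statement, take the maximally depolarizing channel $\Phi_0(x)=\mathrm{Tr}(x)\,1/n$ on $M_n(\mathbb C)$, which is QDS as shown in Section~\ref{sec:def_norm}, and a pure state $\rho$. Then $S(\rho)=0$ and $S(\Phi_0(\rho))=\log n$, so the gap equals $\log n$.

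The main obstacle I expect is the meaning of $d$. For the bound to be attained, $d$ has to be read as $\tau(1)$, which equals the Hilbert-space dimension $n$ for $\M=M_n$. The proof above shows the inequality for $d=\tau(1)$; the bound with the literal $d=\dim\M=n^2$ then follows from $\tau(1)\le\dim\M$, but in that reading it is not sharp. I would therefore state the corollary with $d=\tau(1)$, as the equality case requires, and remark that $\log\dim\M$ is a weaker bound that also holds.
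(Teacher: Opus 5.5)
The paper states this corollary without proof. Your argument is the evident intended one: the cited monotonicity theorem gives the lower bound, and $0\le S\le\log d$ gives the upper bound. Under your normalization it is correct. You are also right about $d$: for $\M=M_n$ with $\tau=\mathrm{Tr}$ the gap never exceeds $\log n$. So the attainment claim is false for the literal $d=\dim\M=n^2$ and only makes sense with $d=\tau(1)$. In the multi-block case, $\tau(1)=\sum_k n_k$, and this value is attained by applying $x\mapsto\tau(x)1/\tau(1)$ to a minimal projection.

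The one point you should not present as a harmless convention is the choice of trace. The faithful traces on $\bigoplus_k M_{n_k}(\mathbb C)$ are $\sum_k c_k\mathrm{Tr}_k$ with arbitrary $c_k>0$. Your step ``each $\mu_j\le 1$, hence $S(\rho)\ge0$'' needs all $c_k=1$.

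\emph{Equal weights.} If all $c_k$ equal a common value $c$, rescaling sends densities $\rho\mapsto\rho/c$ and shifts every entropy by $\log c$. So the gap is unchanged and is bounded by $\log\sum_k n_k$. Two things still fail in this case:
\begin{itemize}
  \item The gap is not bounded by $\log\tau(1)$ when $c<1$.
  \item $S(\rho)\ge0$ fails: $S(np)=-\log n$ for a minimal projection $p$ when $\tau=\frac1n\mathrm{Tr}$ on $M_n$.
\end{itemize}

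\emph{Unequal weights.} Here the statement itself is false. Take $\M=\mathbb C\oplus\mathbb C$ with $\tau(a,b)=a+\epsilon b$ and $0<\epsilon<1/2$. The map $\Phi(a,b)=((1-\epsilon)a+\epsilon b,\,a)$ is positive, unital and $\tau$-preserving. The density $\rho=(0,1/\epsilon)$ gives $\Phi(\rho)=(1,0)$, with $S(\rho)=\log\epsilon$ and $S(\Phi(\rho))=0$. The gap is $\log(1/\epsilon)>\log 2=\log\dim\M$.

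So you should state ``$\tau(e)=1$ for every minimal projection $e$'' as a hypothesis of the corollary, not adopt it as a normalization.

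A minor fix: in the Jensen step, sum only over $\mu_j>0$. This gives $S(\sigma)\le\log\mathrm{rank}\,\sigma\le\log m$.
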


\section{Conclusion}
\label{sec:concl}

In this work, we have developed a foundational framework for \emph{quantum doubly stochastic} (QDS) operators on non-commutative $L_p$-spaces associated to semifinite von Neumann algebras.  Our main contributions include introduced QDS maps, established contractivity and strict contraction criteria via duality and interpolation (Section~\ref{sec:def_norm}), characterized compact QDS maps within Schatten classes and proved closure under strong and weak operator topologies (Section~\ref{sec:compact}), demonstrated invariance of QDS under complex interpolation scales and quantified robustness against trace/unital perturbations (Section~\ref{sec:interpolation}) and applied QDS theory to non-commutative majorization, deriving entropic inequalities and exploring connections to Connes' embedding and free entropy (Section~\ref{sec:majorization}). 

We anticipate that this unified approach to doubly stochasticity in non-commutative $L_p$-spaces will open new interactions between operator algebras, functional analysis, and quantum information theory.  \hfill $\square$\\

\bigskip

\center{\sc Declaration}\\ 
{\scriptsize The author declare that there is no conflict of interest regarding the publication of this paper.}\\

\medskip
\noindent{\sc Financial Support}\\
{\scriptsize The author received no specific funding for this work.}\\

\end{document}